\theoremstyle{plain}
\newtheorem{teo}{Theorem}[section]
\newtheorem{cor}[teo]{Corollary}
\newtheorem{prop}[teo]{Proposition}
\theoremstyle{definition}
\newtheorem{defn}[teo]{Definition}
\newtheorem{exa}[teo]{Example}
\numberwithin{equation}{section}
\def\bbP{{\mathbb P}}
\def\bbT{{\mathbb T}}
\def\proc{{\{{\mathbb G}; N\}}}
\def\procT{{\{\bbT^d; N\}}}
\def\procc{{\{\mathbb{G}; \mathcal{C}, \mathcal{E},T \}}}
\def\proccT{{\{\bbT^d; \mathcal{C}, \mathcal{E},T \}}}
\def\procpg{{\{\bbT^d; \mathcal{P}(\lambda), \mathcal{G}(p) \}}}
\def\procyb{{\{\bbT^d; \mathcal{Y}(\lambda), \mathcal{B}(p) \}}}
\def\yb4{{\{\bbT^4; \mathcal{Y}(\lambda), \mathcal{B}(p) \}}}
\def\pgd10{{\{\bbT^{10}; \mathcal{P}(5), \mathcal{G}(0.6) \}}}
\def\pgd{{\{\bbT^{d}; \mathcal{P}(5), \mathcal{G}(0.6) \}}}
\def\up{{U\{\bbT^d; N\}}}
\def\upP{{U\{\bbT^d_+; N\}}}
\def\upPd{{U\{\bbT^{d+1}_+; N\}}}
\def\low{{L\{\bbT^d; N\}}}
\def\lowP{{L\{\bbT^d_+; N\}}}
\begin{document}

\baselineskip=26pt

\address[F\'abio~P.~Machado]
{Institute of Mathematics and Statistics
\\ University of S\~ao Paulo \\ Rua do Mat\~ao 1010, CEP
05508-090, S\~ao Paulo, SP, Brazil.}

\address[Valdivino~V.~Junior]
{Federal University of Goias
\\ Campus Samambaia, CEP 74001-970, Goi\^ania, GO, Brazil.}

\address[Alejandro Roldan-Correa]
{Instituto de Matem\'aticas, Universidad de Antioquia, Calle 67, no 53-108, Medellin, Colombia}

\title[Colonization and collapse on Homogeneous Trees]{Colonization and collapse on Homogeneous Trees}
\author{Valdivino~V.~Junior}
\author{F\'abio~P.~Machado}
\author{Alejandro Rold\'an-Correa}

\noindent
\email{vvjunior@ufg.br, fmachado@ime.usp.br, alejandro.roldan@udea.edu.co}

\thanks{Research supported by CNPq (306927/2007-1), FAPESP (2010/50884-4) and Universidad de Antioquia (SUI XXXXX).}

\keywords{Branching processes, Coupling, Catastrophes, Population dynamics.}

\subjclass[2010]{60J80, 60J85, 92D25}

\date{\today}

\begin{abstract}

We investigate a basic immigration process where co\-lo\-nies grow, during a random time, 
according to a general counting process until collapse. Upon collapse a random amount of 
indivi\-duals survive. These survivors try independently establishing new colonies at neighbour sites.
Here we consider this general process subject to two schemes, Poisson growth with geometric 
catastrophe and Yule growth with binomial catastrophe. Independent of everything else 
colonies growth, during an exponential time, as a Poisson (or Yule) process and right 
after that  exponential time their size is reduced according to geometric (or binomial) law. 
Each survivor tries independently, to start a new colony at a neighbour 
site of a homogeneous tree. That colony will thrive until its collapse, and so on.
We study conditions on the set of parameters for these processes
to survive, present relevant bounds for the probability of survival, for the number of vertices that were
colonized and for the reach of the colonies compared to the starting point.
\end{abstract}

\singlespacing

\maketitle
\section{Introduction}
\label{S: Introduction}
Biological populations are subject to disasters that can cause from a partial elimination of the individuals until their total extinction. When a disaster occurs surviving individuals may react in different ways. A strategy adopted by some populations is the dispersion. In this case, individuals migrate, trying to create new colonies in other locations, there may be competition or collaboration between individuals of the same colony. Once they settle down a new colony in a new spot, again another disaster can strike, which causes a new collapse.

In this type of population dynamics there are some issues to consider, such as: What is the duration of colonization until the moment of the disaster? How much the population grows until be hit? How many individuals will survive? How survivors react when facing a disaster?

In recent articles, the main variables considered in population modeling are (i) the spatial structure where the colonies are located and individuals can move, (ii) the lifetime of a colony until the moment of collapse, (iii) the evolution of the number of individuals in the colony (random or deterministic growth, possible deaths or migration), (iv) the way the cathastrophes affects the size of the colony allowing or not the survival of some individuals and (v) whether the individuals that survive to the catastrophe are able to spread out.

Brockwell \textit{et al.}~\cite{BGR1982} and later Artalejo \textit{et al.}~\cite{AEL2007} considered a model for the growth of a population subject to collapse. In their model, two types of effects when a disaster strikes are analyzed separately, \textit{binomial effect} and \textit{geometric effect}. After the collapse, the survivors remain together in the same colony (there is no dispersion). They carried out an extensive analysis including first extinction time, number of individuals removed, survival time of a tagged individual, and maximum population size reached between two consecutive extinctions.

More recently, Schinazi~\cite{S2014} and Machado \textit{et al.}~\cite{MRS2015} proposed stochastic models for this kind of population dynamics. For these models they concluded that dispersion is a good survival strategy. Latter Junior \textit{et al.}~\cite{VAF2016} showed nice combinations of a type of catastrophe, spatial restriction and individual survival probability when facing the
catastrophe where dispersion may not be a good strategy for group survival. For a
comprehensive literature overview and motivation see Kapodistria \textit{et al.}~\cite{KPR2016}.

The paper is divided into four sections. In Section 2 we present a general model for the growth of populations subject to collapses,  introduce the variables of interest, notation and two particular schemes: Poisson growth with geometric 
catastrophe and Yule growth with binomial catastrophe. In Section 3 we present the main results of the paper while their proofs are in Section 4.

\section{Colonization and Collapse models}
\label{S: CCM}

In the beginning all vertices of $\mathbb{G}$, a infinite conected graph, are empty except for the origin
where there is one individual. Besides that, at any time each colony is started by a single
individual. The number of individuals in each colony behaves as $\mathcal{C}$, a
Counting Process. To each colony is associated a non-negative random
variable $T$ which defines its lifetime. After a period of time $T$, that colony collapses
and the vertex where it is placed becomes empty. At the time of collapse, with a random effect
$\mathcal{E}$, some individuals in the colony are able to survive while others die. By simplicity we represent this quantity by $N$. Note that this random
quantity depends on the Counting Process which defines the growth of the colony, on the distribution
of $T$ and on how the collapse afects the group of individuals present in the colony at time $T$.
Each one individual that survives ($N$ individuals) tries to found a new
colony on one of the nearest neighbour vertices by first picking one of them at random. If the chosen vertex is occupied, that individual dies, otherwise the individual founds there a new colony. We denote the Colonization and Collapse model generally described here either by $\proc$ or $\procc$, a stochastic process whose state space is $\mathbb{N}^{\mathbb{T}^d}$. Along this
paper we concentrate our attention on $\bbT^d$, a homogeneous tree where every vertex has $d+1$ nearest neighbours and on $\bbT^d_+$, a tree whose only difference from $\bbT^d$ is that its origin has degree $d$.

\begin{defn} Let us consider the following random variables
\begin{itemize}
\item $I_d:$ the number of colonies created from the beginning to the end of the process;
\item $M_d:$ the distance from the origin to the furthest vertex where a colony is created;
\item $\{X_t\}_{0 \le t \le T}$  growth process for the amount of individuals in a colony.
\end{itemize}
\end{defn}

We work in details some specific cases.

\begin{itemize}
\item $T:$ Lifetime of a colony
\begin{itemize}
\item $T \sim {\mathcal Exp}(1)$, Exponential with mean 1
\[ P[T<t] = 1- e^{- t}, \ t > 0 \]
\end{itemize}
\item $X_t:$ Growth of the number of individuals
\begin{itemize}
\item $X_t \sim {\mathcal Poisson}(\lambda t)$, a Poisson point process with rate $\lambda$
\[P[X_t = k] = \frac{e^{-\lambda} \lambda^{k-1}}{(k-1)!} , \ k \in \{1,2,...\} \]
\item $X_t \sim {\mathcal Geom}(e^{-\lambda t})$, a Yule process with rate $\lambda$
\[P[X_t = k] = e^{-\lambda t} (1-e^{-\lambda t})^{k-1}  , \ k \in \{1,2,...\} \]
\end{itemize}
\item $N:$ Number of individuals able to survive
\begin{itemize}
\item $N | X_T \sim {\mathcal B}(X_T,p),$ Binomial catastrophe
\[P[N = m | X_T = k] = {k \choose m} p^m(1-p)^{k-m} , \ m \in \{0, 1,...,k\} \]
\item $N | X_T = X_T- \min\{{\mathcal Geom}(p)-1;X_T\} \sim {\mathcal G}_{X_T}(p),$ Geometric catastrophe
\[ P[N = m | X_T] = \left\{ \begin{array}{ll}
p (1-p)^{X_T - m} & \mbox{if $m \in \{1,...,X_T$\}} \\ \\
(1-p)^{X_T} & \mbox{if $m = 0$}
\end{array}
\right. \]
\end{itemize}
\end{itemize}

In general it is true that
\begin{align*}
\mathbb{P}(N = n) &= \int_0^{\infty} \mathbb{P}(N=n|T=t)f_T(t)dt \\
\mathbb{P}(N = n|T=t) &= \sum_{x=n}^{\infty} \mathbb{P}(X_T=x|T=t)\mathbb{P}(N=n|X_T=x ; T=t).
\end{align*}

Suppose that individuals are born following a Poisson process at rate $\lambda$, that
the collapse time follows an exponential random variable with average 1 ($T \sim {\mathcal Exp}(1)$) and the
individuals are exposed to the collapse effects, one by one, until
the first individual survive, if any, then the collapse effects stop. If the collapse effects
reach a fixed individual, it survives with probability $p$,
meaning that $N_T \sim {\mathcal G}_{X_T}(p)$ (Geometric catastrophe) or ${\mathcal G}(p)$
for short. Let us consider the distribution of the number of survivals at collapse times
\begin{align*}
\mathbb{P}(N = 0) = \int_{0}^{\infty}e^{-t}\sum_{j=0}^{\infty}\frac
{e^{-\lambda t}(\lambda t)^j}{j!}(1-p)^{j+1}dt = \frac{1-p}{1 +\lambda
p }
\end{align*}
and for $n \geq 1$:
\begin{align*}
\mathbb{P}(N = n) = \int_{0}^{\infty}e^{-t}\sum_{j=n-1}^
{\infty}\frac{e^{-\lambda t}(\lambda t)^j}{j!}  p(1-p)^{j+1-n}dt
= \left(\frac{\lambda }{\lambda  + 1}\right )^{n-1} \frac{p}{\lambda p +1}.
\end{align*}

\noindent
In this case the probability generating function is of $N$ is
\begin{align}
\label{eq: fgpP}
\mathbb{E}(s^N) =& \frac{1-p}{1+\lambda p}+\sum_{n=1}^{\infty} s^n \left(\frac{\lambda }{1+\lambda }\right)^{n-1} \left(\frac{p}{\lambda p+1}\right) \\
=& \frac{1}{\lambda p +1}\left[1-p+\frac{(\lambda+1)ps}{1+\lambda-\lambda s}\right].
\end{align}
while its average is $\displaystyle \mathbb{E}(N) = \frac{p (\lambda +1)^2}{(\lambda p + 1)}.$

Suppose now that individuals are born following a Yule process at rate $\lambda$, that
$T \sim {\mathcal Exp}(1)$ and that the
disaster reach the individuals simultaneously and independently of everything else.
Assuming that each individual survives with probability $p$,
we have that $N_T \sim {\mathcal B}(X_T, p)$ (Binomial catastrophe) or
${\mathcal B}(p)$ for short. Let us consider the distribution of the number of survivals at
collapse times.
\begin{align*}
\mathbb{P}(N = 0) =&\int_0^\infty e^{-t}\sum_{j= 1}^\infty e^{-\lambda t}(1-e^{-\lambda t})^{j-1}(1-p)^{j}dt \\
=&\frac{1-p}{\lambda+1}\ {_2F_1}\left(1,1;2+\frac{1}{\lambda};{1-p}\right).
\end{align*}
\noindent
and for $n \geq 1$
\begin{align*}
\mathbb{P}(N = n) =& \int_0^\infty e^{-t}\sum_{j= n}^\infty e^{-\lambda t}(1-e^{-\lambda t})^{j-1}{j \choose n}p^n(1-p)^{j-n}dt \\
=&\frac{p^k}{\lambda}B\left(k,1+\frac{1}{\lambda}\right){_2F_1}\left(k+1,k;k+1+\frac{1}{\lambda};{1-p}\right).
\end{align*}
In this setup the probability generating function of $N$ is
\begin{eqnarray}\label{eq: fgpY}
\mathbb{E}(s^N)&=&\sum_{n=0}^{\infty} s^n \int_{0}^{\infty} e^{-t} \sum_{k=n \vee 1}^{\infty} e^{-\lambda t}(1-e^{-\lambda t})^{k-1} {k \choose n} p^n(1-p)^{k-n} \ dt\nonumber \\
&=&\frac{ps+1-p}{\lambda +1}\ {_2F_1}\left(1,1;2+\frac{1}{\lambda}; p(s-1)+1\right)
\end{eqnarray}
and its average is
\begin{eqnarray}\label{eq: averageN}
\mathbb{E}(N)=
\left\{\begin{array}{cl} \displaystyle\frac{p}{1-\lambda} & ,\text{ se }  \lambda<1 \\ \\ \infty &, \text{ se } \lambda\geq 1.
\end{array}\right.\nonumber
\end{eqnarray}

\section{Main Results}
\label{S: Homogeneous Trees}

$\{ \bbT^d, \mathcal{C}, \mathcal{E}, T \}$ is a stochastic process whose state space is $\mathbb{N}^{\mathbb{T}^d}$ and whose evolution (status at time $t$) is denoted by $\eta_t$. For a vertex $x \in \mathbb{T}^d$, $\{\eta_t(x)=i\}$ means that at the time $t$ there are $i$
individuals at the vertex $x$. We consider $|\eta_t| = \sum_{x \in \mathbb{T}^d} \eta_t(x)$.

\subsection{Phase Transition}
\label{SS: PT}

\begin{defn}
Let $\eta_t$ be the process $\proccT$. Let us define the event
\[ V_d =  \{ |\eta_t| > 0, \hbox{ for all } t \ge 0 \}. \]

If $\mathbb{P}(V_d) > 0$ we say that  the process $\proccT$ {\it survives}. Otherwise, we say that the process $\proccT$ {\it dies out }.
\end{defn}

\begin{teo}
\label{T: MCC1H}
Consider the process $\procT$. Then $\mathbb{P}(V_d) = 0 $ if
\begin{displaymath}
\mathbb{E} \left [ \left(\frac{d}{d+1} \right)^N \right] \geq \frac{d}{d+1}
\end{displaymath}
and
$\mathbb{P}(V_d) > 0 $ if
\begin{displaymath}
\mathbb{E} \left [ \left(\frac{d}{d+1} \right)^N \right] < \frac{d-1}{d}.
\end{displaymath}
\end{teo}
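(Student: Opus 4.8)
The plan is to compare the process on $\bbT^d$ with a branching process (or branching random walk) on the tree, exploiting the fact that each colony, at the moment of its collapse, produces $N$ potential founders, each of which picks one of the $d+1$ neighbouring vertices uniformly at random. I would track only the colonies that are created along the way; the relevant object is the genealogical tree of colonies. A colony at a vertex $x$ (reached from its parent colony) has $d$ ``forward'' neighbours (the one leading back to the parent is already occupied, or was), and each of its $N$ survivors independently founds a new colony at a uniformly chosen neighbour. The key quantity is the probability that a given survivor successfully founds a \emph{new} colony on a previously empty vertex.

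For the subcritical (death) half, I would dominate the colony-genealogy from above by a Galton--Watson process in which each colony has offspring distribution obtained as follows: given $N=n$ survivors, each survivor \emph{fails} to produce a forward colony if it picks the backward neighbour, which happens with probability $1/(d+1)$; so the number of forward offspring is stochastically at most $\mathrm{Bin}(n, d/(d+1))$ (we over-count, since some forward neighbours may also be occupied). Hence the mean offspring number is at most $\bbE[N]\cdot d/(d+1)$, but that crude bound is not sharp enough; instead I would compute the extinction criterion via the generating function. The expected number of offspring of a colony is $\bbE\big[\sum_{\text{survivors}} \mathbf{1}\{\text{picks a forward neighbour}\}\big]$, and more usefully, conditioning on $N=n$, the probability that \emph{no} survivor goes forward is $(1/(d+1))^n$, so the probability the colony's forward-offspring count is zero is $\bbE[(1/(d+1))^N]$. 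The right comparison is with the expected number of colonies at generation one started from a single colony; I would argue that this mean equals (or is bounded by) a quantity that is $\le 1$ precisely when $\bbE[(d/(d+1))^N]\ge d/(d+1)$. Concretely: from the origin there are $d+1$ neighbours and $N$ survivors, the expected number of distinct neighbours chosen being $(d+1)\big(1-\bbE[((d+1-1)/(d+1))^N]\big) = (d+1)\big(1-\bbE[(d/(d+1))^N]\big)$; requiring this (suitably discounted by the $d/(d+1)$ continuation factor for non-origin colonies) to be $\le 1$ gives exactly the stated inequality. Since subcritical Galton--Watson processes die out a.s., $\bbP(V_d)=0$ follows.

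For the supercritical (survival) half, I would build a \emph{lower} bound by an embedded Galton--Watson process that is guaranteed to live on genuinely new vertices. Here the trick is to be conservative: when a colony has $N=n$ survivors, I only count offspring on forward neighbours, and to avoid two survivors colliding on the same vertex or revisiting occupied vertices, I discard a survivor unless it lands on a vertex not targeted by any other survivor of the same colony and not previously used. A clean way is: order the $n$ survivors, let the $j$-th survivor count only if it picks one of the $d$ forward neighbours not chosen by survivors $1,\dots,j-1$. The resulting lower-bound offspring law has mean at least $d\big(1-\bbE[(d/(d+1))^N]\big)$ after accounting for the $1/(d+1)$ chance of going backward, and this exceeds $1$ exactly when $\bbE[(d/(d+1))^N] < (d-1)/d$. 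A supercritical Galton--Watson process survives with positive probability, and since the embedded colonies sit on distinct vertices of $\bbT^d$ by construction, survival of the embedded process implies $|\eta_t|>0$ for all $t$, giving $\bbP(V_d)>0$.

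The main obstacle I anticipate is handling the dependence introduced by \emph{collisions}: when several survivors of the same colony, or survivors of different colonies, target the same vertex, the naive branching comparison breaks. The upper bound is easy because over-counting only helps; the delicate part is the lower bound, where I must set up the thinning so that the embedded process is honestly a Galton--Watson process on disjoint vertices, and simultaneously keep its mean above $1$ under the hypothesis $\bbE[(d/(d+1))^N]<(d-1)/d$. The gap between $d/(d+1)$ and $(d-1)/d$ in the two thresholds is precisely the room lost to this thinning, so the argument should be arranged to make that loss explicit (e.g., the lower-bound colony keeps only survivors landing on \emph{fresh} forward vertices, of which there are at least $d-1$ after excluding the parent edge and at most one previously-targeted vertex per step). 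I would also need to confirm that the continuous-time lifetime $T$ plays no role in the phase-transition criterion beyond entering the law of $N$, which the problem setup already guarantees since $N$ is defined as the number of survivors at the collapse time.
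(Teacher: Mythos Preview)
Your approach is essentially the paper's: sandwich $\procT$ between two homogeneous Galton--Watson processes whose offspring mean is the expected number of \emph{distinct} neighbours hit by the $N$ survivors. The paper formalizes this via two named auxiliary processes and explicit couplings: an upper bound (``Self Avoiding'' on $\bbT^{d+1}_+$, where the backward neighbour is replaced by a fresh vertex, giving mean $(d+1)\bigl[1-\mathbb{E}\bigl[\bigl(\tfrac{d}{d+1}\bigr)^{N}\bigr]\bigr]$) and a lower bound (``Move Forward or Die'' on $\bbT^{d}_+$, where any survivor choosing the backward neighbour is killed, giving mean $d\bigl[1-\mathbb{E}\bigl[\bigl(\tfrac{d}{d+1}\bigr)^{N}\bigr]\bigr]$). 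Your ``only count forward offspring, kill backward movers'' is exactly the latter, and your ``over-count by allowing all $d+1$ neighbours to be fresh'' is exactly the former.

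Two cosmetic points. First, drop the remark about ``suitably discounted by the $d/(d+1)$ continuation factor for non-origin colonies'': no discounting is needed, since in the upper-bound process every colony (root or not) has the same $(d+1)$-neighbour offspring law, and $(d+1)\bigl[1-\mathbb{E}\bigl[\bigl(\tfrac{d}{d+1}\bigr)^{N}\bigr]\bigr]\le 1$ is already equivalent to $\mathbb{E}\bigl[\bigl(\tfrac{d}{d+1}\bigr)^{N}\bigr]\ge \tfrac{d}{d+1}$. Second, your parenthetical ``at least $d-1$ after excluding the parent edge and at most one previously-targeted vertex per step'' is not the mechanism that produces the $(d-1)/d$ threshold; that threshold comes cleanly from requiring $d\bigl[1-\mathbb{E}\bigl[\bigl(\tfrac{d}{d+1}\bigr)^{N}\bigr]\bigr]>1$ in the lower-bound branching process, with no further thinning needed beyond killing backward movers.
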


\begin{cor}
\label{C: MCC1HP}
Consider the process $\procpg$.

\begin{itemize}
\item[$(i)$] $\mathbb{P}(V_d) = 0$ if  
\begin{equation}\label{C:tfp1}
(\lambda^2d + \lambda d + \lambda +d + 1)p  \leq \lambda + d + 1. 
\end{equation}
 
\item[$(ii)$] $\mathbb{P}(V_d) > 0$ if 
\begin{equation}\label{C:tfp2}
(\lambda^2d - \lambda^2+ \lambda d - \lambda +d )p  > \lambda + d + 1.
\end{equation}

\end{itemize}
\end{cor}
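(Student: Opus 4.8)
The plan is to derive the corollary directly from Theorem~\ref{T: MCC1H} by computing $\mathbb{E}\!\left[\left(\tfrac{d}{d+1}\right)^{N}\right]$ for the Poisson-growth/geometric-catastrophe scheme and feeding it into the two inequalities of that theorem. Since $\tfrac{d}{d+1}\in(0,1)$, the expectation is just the value of the probability generating function of $N$ at $s=\tfrac{d}{d+1}$, and this generating function was already obtained in closed form in \eqref{eq: fgpP}.

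Concretely, I would start from $\mathbb{E}(s^{N}) = \frac{1}{\lambda p+1}\left[1-p+\frac{(\lambda+1)ps}{1+\lambda-\lambda s}\right]$ and substitute $s=\tfrac{d}{d+1}$. The only simplification needed is $1+\lambda-\lambda s = 1+\frac{\lambda}{d+1} = \frac{d+1+\lambda}{d+1}$, which gives
\[
\mathbb{E}\!\left[\left(\tfrac{d}{d+1}\right)^{N}\right] = \frac{1}{\lambda p+1}\left[1-p+\frac{(\lambda+1)pd}{d+1+\lambda}\right].
\]

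For part $(i)$ I would then impose the extinction condition $\mathbb{E}\!\left[\left(\tfrac{d}{d+1}\right)^{N}\right]\ge \frac{d}{d+1}$ from Theorem~\ref{T: MCC1H}, clear the three strictly positive denominators $\lambda p+1$, $d+1$ and $d+1+\lambda$, expand, and collect the result as a linear inequality in $p$; this rearranges exactly to \eqref{C:tfp1}. For part $(ii)$ I would do the same with the survival condition $\mathbb{E}\!\left[\left(\tfrac{d}{d+1}\right)^{N}\right]< \frac{d-1}{d}$, again clearing denominators and collecting in $p$, obtaining precisely \eqref{C:tfp2}.

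There is no conceptual obstacle here — the whole argument is one substitution plus an appeal to Theorem~\ref{T: MCC1H}. The only care required is in the bookkeeping of the polynomial expansion (degree two in both $\lambda$ and $d$, degree one in $p$): one must check that the quantities divided by are positive so the inequality directions are preserved, and verify that the various cross terms cancel down to the compact expressions appearing in \eqref{C:tfp1} and \eqref{C:tfp2}.
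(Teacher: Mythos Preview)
Your proposal is correct and matches the paper's approach exactly: the paper's proof simply says to set $s=\tfrac{d}{d+1}$ in \eqref{eq: fgpP} and apply Theorem~\ref{T: MCC1H}. Your write-up just makes the algebra behind that one-line proof explicit.
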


\begin{cor}
\label{C: MCC1HPY}
Consider the process $\procyb$.

\begin{itemize}
\item[$(i)$] $\mathbb{P}(V_d) = 0 $ if
\begin{equation}\label{C:tfy1}
_2 F_1 \left(1,1; 2 +\frac{1}{\lambda}; \frac{d(1-p) +1}{d+1} \right)  \geq \frac{d(\lambda +1)}{d +1 - p}.
\end{equation}

\item[$(ii)$] $\mathbb{P}(V_d) > 0 $ if
\begin{equation}\label{C:tfy2}
_2 F_1 \left(1,1; 2 +\frac{1}{\lambda}; \frac{d(1-p) +1}{d+1} \right) < \frac{(d^2-1)(\lambda +1)}{d(d+1-p)}.
\end{equation}
\end{itemize}
\end{cor}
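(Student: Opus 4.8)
The claim follows from Theorem~\ref{T: MCC1H} by a direct substitution, so the plan is to evaluate the probability generating function of the number of survivors $N$ at the threshold value $s=d/(d+1)$ and then recast the two inequalities $\mathbb{E}[(d/(d+1))^{N}]\ge d/(d+1)$ and $\mathbb{E}[(d/(d+1))^{N}]<(d-1)/d$ in the stated form. First I would recall from \eqref{eq: fgpY} that for the Yule growth with binomial catastrophe scheme
\[
\mathbb{E}(s^{N})=\frac{ps+1-p}{\lambda+1}\,{}_2F_1\!\left(1,1;\,2+\tfrac1\lambda;\,p(s-1)+1\right),
\]
and observe that the linear prefactor $ps+1-p$ and the fourth hypergeometric argument $p(s-1)+1$ are one and the same expression. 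Hence setting $s=d/(d+1)$ collapses both of them into a single affine function $r(p):=(ps+1-p)\big|_{s=d/(d+1)}$, which takes values in $(0,1)$ for $p\in(0,1)$; in particular ${}_2F_1(1,1;2+1/\lambda;\cdot)$ is then evaluated strictly inside its disc of convergence, where it is finite, positive and nondecreasing, so the inequalities behave monotonically. Thus $\mathbb{E}[(d/(d+1))^{N}]=\dfrac{r(p)}{\lambda+1}\,{}_2F_1(1,1;2+\tfrac1\lambda;r(p))$.

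With this identity in hand the two cases reduce to bookkeeping. For part~$(i)$ I would insert the last expression into the extinction criterion of Theorem~\ref{T: MCC1H}, namely $\mathbb{E}[(d/(d+1))^{N}]\ge d/(d+1)$, and clear the strictly positive factors $r(p)$, $\lambda+1$ and $d+1$; this isolates ${}_2F_1$ on the left and leaves on the right the quantity $d(\lambda+1)/(d+1-p)$, which is precisely \eqref{C:tfy1}. For part~$(ii)$ the identical manipulation applied to the survival criterion $\mathbb{E}[(d/(d+1))^{N}]<(d-1)/d$ yields \eqref{C:tfy2}; the factor $d^{2}-1$ appearing there is nothing but $(d-1)(d+1)$ after multiplying through by $d+1$.

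There is no genuine obstacle: the analytic content — the coupling of $\procyb$ with the homogeneous lower and upper tree processes, together with the branching/percolation mechanism responsible for the constants $d/(d+1)$ and $(d-1)/d$ — is entirely absorbed into Theorem~\ref{T: MCC1H}, while the probabilistic content — the closed form of the generating function of $N$ — is absorbed into \eqref{eq: fgpY}. The only step demanding any care is the elementary simplification of $r(p)$ at $s=d/(d+1)$ and the check that clearing denominators reproduces exactly the coefficients displayed in \eqref{C:tfy1} and \eqref{C:tfy2}; a hasty expansion of $ps+1-p$ is about the only thing that could derail the argument.
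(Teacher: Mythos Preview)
Your proposal is correct and is precisely the paper's approach: the paper's proof consists of the single sentence ``Assuming $s=\frac{d}{d+1}$ in (\ref{eq: fgpY}) and applying Theorem~\ref{T: MCC1H} the result follows,'' and you have spelled out exactly that substitution and the ensuing algebra. Your observation that the prefactor $ps+1-p$ and the hypergeometric argument $p(s-1)+1$ coincide is a nice touch that makes the clearing of denominators transparent.
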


Observe that for the process $\procc,\hbox{ when } \mathcal{C} \in \{\mathcal{Y}(\lambda), \mathcal{P}(\lambda)\}
\hbox{ and } \mathcal{E} \in \{\mathcal{G}(p), \mathcal{B}(p)\},$ by a coupling argument one can see that $\mathbb{P}(V_d)$ is a non-decreasing function of $\lambda$ and also of $p$. Moreover, the function $\lambda_c(p)$, defined by
$$\lambda_c(p):=\inf\{\lambda: \mathbb{P}(V_d) >0 \},$$
is a non-increasing function of $p$, with $\lambda_c(1)=0$ and $\lambda_c(0)=\infty$.

\begin{defn}
Let $\eta_t$ be a $\procc \hbox{ for } \mathcal{C} \in \{\mathcal{Y}(\lambda),
\mathcal{P}(\lambda)\} \hbox{ and } \mathcal{E} \in \{\mathcal{G}(p), \mathcal{B}(p)\},$
with $0<p<1$. We say that $\eta_t$ exhibits \textit{phase transition} on $\lambda$ if
$0<\lambda_c(p)<\infty.$
\end{defn}

Machado \textit{et al.}(2016) proved phase transition on $\lambda $ for the process  $\procyb$. 
So, there exists a function
$\lambda_c(\cdot):(0,1)\rightarrow \mathbb{R}^+$ whose graphic
splits the parametric space $\lambda \times p$ into two regions. For those values 
of $(\lambda,p)$ above the curve $\lambda_c(p)$,  there is survival in $\procyb$ with positive probability.
Moreover, for those values of $(\lambda,p)$  below the curve $\lambda_c(p)$ extinction occurs in $\procyb$ 
with probability 1.

However,
it is not known anything about the continuity and strict monotonicity (in $p$) of the function $ \lambda_c(p) $. If there is continuity and strict monotonicity, then the process also has phase transition in $p$ for each $\lambda \in (0, \infty)$ fixed.

In order to answer the question about phase transition on $p$
for the process $\procc, \\ \hbox{ when } \mathcal{C} \in \{\mathcal{Y}(\lambda), \mathcal{P}(\lambda)\}
\hbox{ and } \mathcal{E} \in \{\mathcal{G}(p), \mathcal{B}(p)\},$
we start with the following definition
\[p_c(\lambda):=\inf\{p: \mathbb{P}(V_d) > 0 \}.\]

\begin{defn}
Let $\eta_t$ be a $\procc \hbox{ for } \mathcal{C} \in \{\mathcal{Y}(\lambda),
\mathcal{P}(\lambda)\} \hbox{ and } \mathcal{E} \in \{\mathcal{G}(p), \mathcal{B}(p)\},$
with $\lambda \in (0, \infty)$ fixed. We say that $\eta_t$ exhibits \textit{phase transition} on $p$ if
$0<p_c(\lambda)<1.$
\end{defn}

The item $(i)$ of Corollary~\ref{C: MCC1HPY} coincides with item $(iii)$ of Theorem 3.1
from Machado {\it et al.}~\cite{MRS2015}. The novelty of Corollary~\ref{C: MCC1HPY} is
its item $(ii)$ which provides a suficient condition for survival. Corollary~\ref{C: MCC1HPY}
guarantees phase transition in $p$ for $\procyb$ for $\lambda$ large enough, and gives
lower and upper bounds for $\lambda_c(p)$.

\begin{exa}
Consider $\yb4$. The equalities in (\ref{C:tfy1}) and (\ref{C:tfy2}) provide lower and upper bounds, respectively, for $\lambda_c(p)$. See Figure \ref{F: LimitanteTransicaoAnt}. These bounds guarantees phase transition in $p$ for $\lambda>\lambda_4^*.$ Where $\lambda_d^*$ is an upper bound for $\displaystyle\lim_{p\rightarrow 1^-}\lambda_c(p)$, where the former is the solution for
  \begin{displaymath}
 _2 F_1 \left(1,1; 2 +\frac{1}{\lambda}; \frac{1}{d+1} \right) = \frac{(d^2-1)(\lambda +1)}{d^2},
 \end{displaymath}
see Corollary \ref{C: MCC1HPY} $(ii)$. The following table shows computations for $\lambda_d^*$
for some values of $d$

  \begin{center}
  	\begin{tabular}{|l|c|c|c|c|c|c|}\hline
  		$d$ & 2 &3  & 4     &  5      & 6    & 10         \\ \hline
  		$\lambda_d^*$ & 0.4555826 & 0.1613016& 0.08212601 &   0.04961835   & 0.03315455   & 0.01110147     \\ \hline
  	\end{tabular}
  \end{center}

\begin{figure}[ht]
	
	\begin{tabular}{ccc}
		$\lambda$ & \parbox[c]{8cm}{\includegraphics[trim={1cm 1.5cm 1cm 1.5cm}, clip, width=8cm]{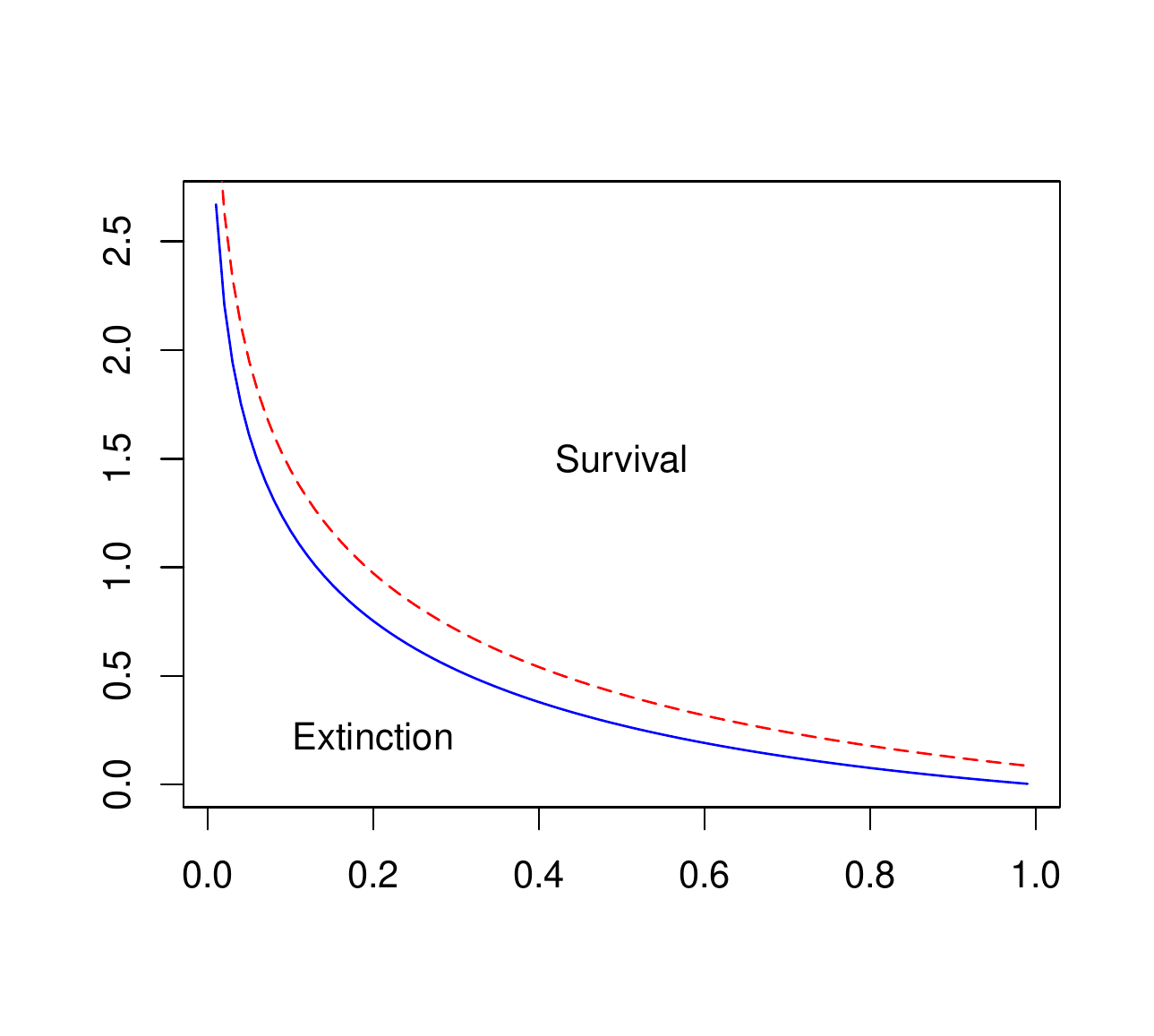}} & 
		\begin{tabular}{l}
			\textcolor{red}{- - - } Equality at (\ref{C:tfy1})\\ 
			\textcolor{blue}{------} Equality at (\ref{C:tfy2}) 
		\end{tabular} \\
		& $p$
	\end{tabular}
		\caption{Lower and upper bounds for $\lambda_c(p)$ in $\yb4$}
		\label{F: LimitanteTransicaoAnt}
\end{figure}

\end{exa}	

\begin{exa} Consider $\{\bbT^{4}; \mathcal{P}(\lambda), \mathcal{G}(p) \}$. The equalities in (\ref{C:tfp1}) and (\ref{C:tfp2}) provide lower and upper bounds, respectively, for $\lambda_c(p)$. See Figure \ref{F:Poison-Gem-d4}.  These bounds guarantees phase transition in $p$ for $\lambda>\lambda_4^*.$ Where $\lambda_d^*$ is an upper bound for $\displaystyle\lim_{p\rightarrow 1^-}\lambda_c(p)$, where the former is the solution for  
	\begin{displaymath}
	(\lambda^2d - \lambda^2+ \lambda d - \lambda +d )p  = \lambda + d + 1,
	\end{displaymath} 
	when $p=1,$ see Corollary \ref{C: MCC1HP} $(ii)$. Thus, $$\lambda_d^*=\frac{1}{d-1}.$$

\begin{figure}[ht]
	
\begin{tabular}{ccc}
	$\lambda$ & \parbox[c]{8cm}{\includegraphics[trim={1cm 1.5cm 1cm 2cm}, clip, width=8cm]{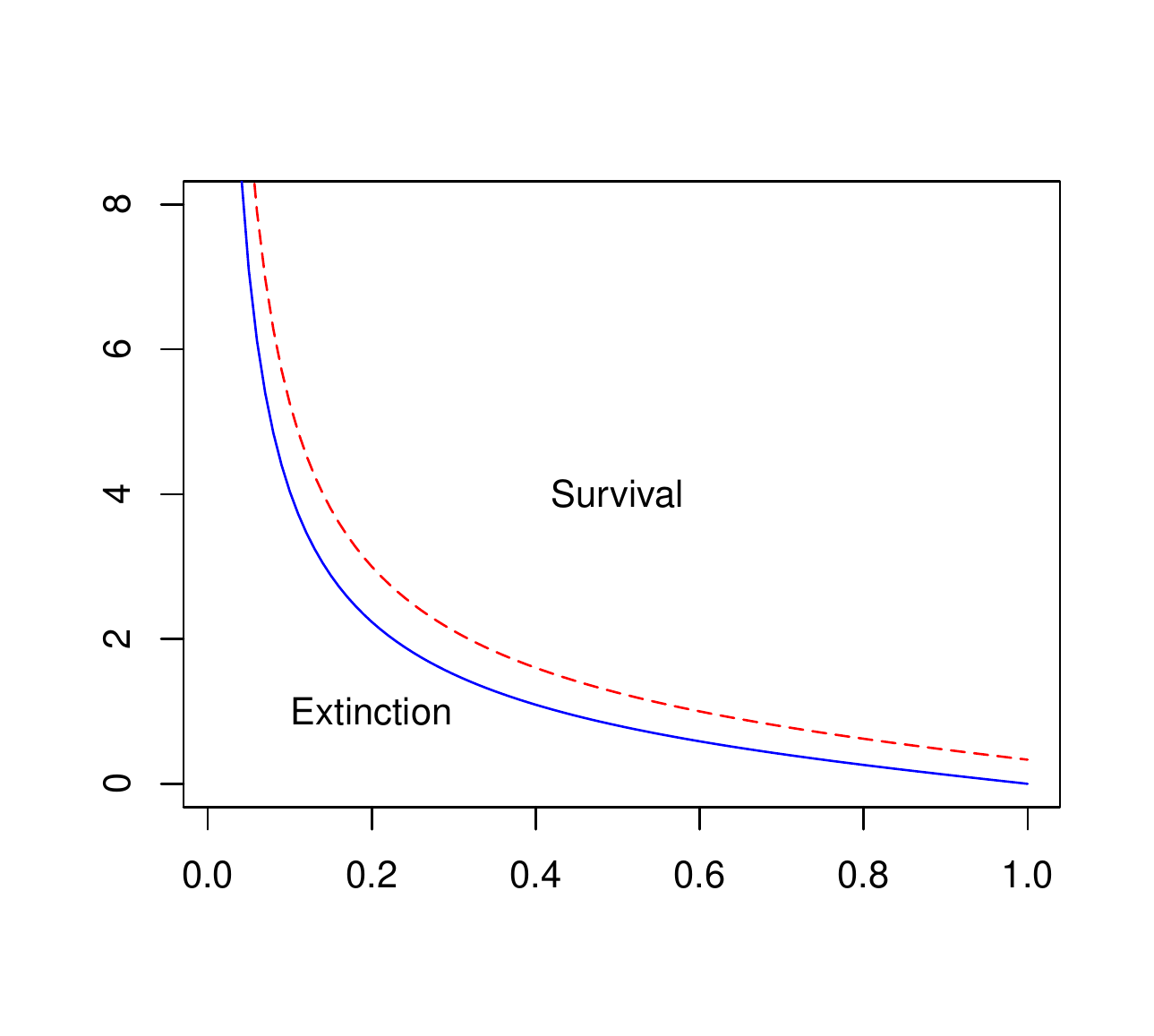}} & 
	\begin{tabular}{l}
		\textcolor{red}{- - - } Equality at (\ref{C:tfp1}) \\ 
		\textcolor{blue}{------} Equality at (\ref{C:tfp2})
	\end{tabular} \\
	& $p$
\end{tabular}
	\caption{Lower and upper bounds for $\lambda_c(p)$ in $\{\bbT^{4}; \mathcal{P}(\lambda), \mathcal{G}(p) \}$}
	\label{F:Poison-Gem-d4}
\end{figure}
	
\end{exa}	

\subsection{Probability of Survival}
\label{SS: SP}

We denote by $T(n,k)$ the number of surjective functions $f:A \to B $, where $|A| = n$ and $|B| = k$, whose value is given, by the inclusion-exclusion principle (see  Tucker~\cite{Tucker} p. 319), by
\begin{displaymath}
T(n,k) = \sum_{i=0}^{k} \left [ (-1)^i \binom{k}{i}(k-i)^n \right], n \geq k.
\end{displaymath}


\begin{teo}
\label{T: MCCHPE1}
Consider the process $\procT$. We have that
\begin{displaymath}
 \sum_{r=1}^{d+1} \left [(1 - \rho^r)\binom{d+1}{r}\sum_{n=r}^{\infty}\frac{T(n,r)}{(d+1)^n}\mathbb{P}(N=n) \right] \leq \mathbb{P}(V_d) \leq 1-\psi
\end{displaymath}
where $\psi$ and $\rho$ are, respectively, the smallest non-negative solutions of
\begin{align*}
&\sum_{y=0}^{d+1} \left[ s^y\binom{d+1}{y}\sum_{n=y}^{\infty}\frac{T(n,y)}{(d+1)^n}\mathbb{P}(N=n) \right] = s,\\
&\sum_{y=1}^{d} \left [ s^y\binom{d}{y}\sum_{n=y}^{\infty}\frac{T(n,y)+T(n,y+1)}{(d+1)^n}\mathbb{P}(N=n)\right ] = s - \sum_{n=0}^{\infty}\frac{\mathbb{P}(N=n)}{(d+1)^n} .
\end{align*}
\end{teo}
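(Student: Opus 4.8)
The plan is to analyze the colonization process as a branching-type process on $\bbT^d$, where the "offspring" of a colony are the new colonies founded by its $N$ survivors at the $d$ available neighbour sites (or $d+1$ at the origin). The key combinatorial observation is that if a colony has $N=n$ survivors, each survivor independently picks one of the $d$ (respectively $d+1$) neighbours uniformly, and a neighbour gets a new colony iff at least one survivor picks it; the number of \emph{distinct} neighbours chosen, conditioned on exactly $r$ being chosen, is counted by surjections, giving the weight $\binom{d}{r} T(n,r)/(d+1)^n$-type expressions. (Here one must be slightly careful: a survivor who picks an already-occupied site dies, but in the tree the parent site is occupied only momentarily and, more importantly, the natural domination is to \emph{ignore} collisions with already-colonized vertices for the lower bound and to treat the process as a branching random walk, then correct. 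This is exactly why the origin term uses $d+1$ directions and interior colonies use $d$ directions, and why $T(n,y)+T(n,y+1)$ appears in the $\rho$-equation — a survivor from an interior colony may aim back at the parent, which is harmless for the embedded tree but changes the bookkeeping of forward progress.)

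For the \textbf{upper bound} $\mathbb{P}(V_d)\le 1-\psi$: I would set up the extinction equation for the embedded branching process. Let $\psi$ be the extinction probability of the branching process started from the origin colony, which has $d+1$ free directions. A colony with $N=n$ survivors produces, for each subset pattern, $y$ forward colonies with probability $\binom{d+1}{y}T(n,y)/(d+1)^n$ (the origin) or with the $d$-direction analogue (interior colonies); each such forward colony independently goes extinct with probability $s$ where $s$ solves the interior fixed-point equation. Summing over $n$ against $\mathbb{P}(N=n)$ and over $y$ gives the two displayed equations; standard branching-process theory (the extinction probability is the smallest nonnegative fixed point of the offspring generating function) then yields $\mathbb{P}(\text{extinction})\ge\psi$, hence $\mathbb{P}(V_d)\le 1-\psi$. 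One subtlety is that the true process is \emph{dominated} by this branching process (collisions only kill individuals), so survival of the true process implies survival of the dominating one — giving the correct direction of the inequality.

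For the \textbf{lower bound}: I would construct an explicit sub-process (a genuine tree of colonies embedded in $\bbT^d$ that never revisits a vertex) and bound its survival probability from below. The term $(1-\rho^r)\binom{d+1}{r}\sum_n \frac{T(n,r)}{(d+1)^n}\mathbb{P}(N=n)$ reads: the origin colony seeds exactly $r$ distinct neighbours with the displayed probability, and conditioned on that, the probability that at least one of those $r$ independent sub-trees survives is $1-\rho^r$, where $\rho$ is the survival—actually extinction—probability of an interior sub-tree in this self-avoiding embedding. The self-avoiding constraint is precisely what forces the $T(n,y)+T(n,y+1)$ combination and the inhomogeneous right-hand side $s-\sum_n \mathbb{P}(N=n)/(d+1)^n$ in the $\rho$-equation: one must account separately for the event that a survivor aims at the (forbidden) parent vertex, and the $\sum_n \mathbb{P}(N=n)/(d+1)^n$ term is the probability that \emph{every} survivor of an interior colony aims back at the parent (including $N=0$), so that colony produces no forward progeny. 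I would verify that $\rho$ so defined is indeed $\ge$ the true conditional extinction probability of a forward branch, so that $1-\rho^r$ is a valid lower bound.

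The \textbf{main obstacle} I anticipate is getting the two generating-function equations exactly right — in particular justifying rigorously that the true process is sandwiched between these two auxiliary branching processes, and carefully handling the parent-vertex direction for interior colonies (the asymmetry between the $d+1$ directions at the origin and the effective $d$ forward directions elsewhere, together with the "wasted" arrows pointing back). Once the correct auxiliary processes are identified, the rest is an application of the classical fact that the extinction probability of a (multitype-free, single-type) Galton–Watson process is the minimal fixed point of its offspring p.g.f., combined with the monotone coupling/domination arguments; the surjection count $T(n,k)$ enters only through the elementary identity that the probability that $n$ i.i.d.\ uniform choices among $k$ slots hit a \emph{given} set of exactly $r$ slots surjectively is $T(n,r)/k^n$.
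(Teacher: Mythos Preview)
Your approach is essentially the paper's: sandwich $\procT$ between two auxiliary branching processes and invoke the minimal-fixed-point characterization of extinction for each. The paper makes this explicit by naming the two processes --- \emph{Move Forward or Die} $\low$ for the lower bound (survivors choose among all $d+1$ neighbours, those aiming at the parent die; this yields the $\rho$-equation and the lower-bound sum exactly as you describe, including your correct reading of $T(n,y)+T(n,y+1)$ and of the right-hand constant as the $y=0$ contribution) and \emph{Self Avoiding} $\upPd$ for the upper bound --- and proving the two couplings and the two offspring-pgf formulas in separate propositions.

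One point where your description drifts: for the upper bound the paper does \emph{not} use an inhomogeneous process on $\bbT^d$ with $d+1$ directions at the origin and ``the $d$-direction analogue'' at interior colonies, as your paragraph suggests. Instead it passes to the rooted tree $\bbT^{d+1}_+$ (root of degree $d+1$, every other vertex of degree $d+2$) and runs the Self Avoiding process there, so that \emph{every} colony, root included, has exactly $d+1$ forward neighbours and the survivors choose uniformly among those. This makes the dominating process a single-type homogeneous Galton--Watson tree, which is why the $\psi$-equation uses $\binom{d+1}{y}$ and $(d+1)^n$ uniformly and why $1-\psi$ is simply its survival probability. The coupling sends each backward attempt in $\procT$ to the extra forward neighbour in $\bbT^{d+1}_+$. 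This resolves precisely the ``main obstacle'' you anticipated; once the correct ambient tree is chosen, the remainder is exactly the classical argument you outline.
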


\begin{teo}
\label{T: MCCHPE1L}
Consider the process $\procT$. We have that
\[ \lim_{d \to \infty} \mathbb{P}(V_d) = 1 - \nu
\]
where  $\nu$ is the smallest non-negative solution of $ \mathbb{E}(s^N) = s.$
\end{teo}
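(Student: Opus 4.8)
The plan is to derive $\lim_{d\to\infty}\mathbb P(V_d)$ by squeezing it between the two explicit bounds produced in Theorem~\ref{T: MCCHPE1} and showing that both converge to the same limit $1-\nu$, where $\nu$ solves $\mathbb E(s^N)=s$. So the first step is to analyze the upper bound $1-\psi$, where $\psi$ is the smallest non-negative root of
\[
\sum_{y=0}^{d+1}\left[s^y\binom{d+1}{y}\sum_{n=y}^{\infty}\frac{T(n,y)}{(d+1)^n}\mathbb P(N=n)\right]=s.
\]
The key observation is that $\sum_{y=0}^{n}\binom{d+1}{y}\frac{T(n,y)}{(d+1)^n}$ is exactly the probability-generating-function-type reorganization of the event ``$n$ i.i.d.\ uniform choices among $d+1$ neighbours land on exactly $y$ distinct vertices,'' so the left-hand side equals $\mathbb E\!\left[\mathbb E[s^{Y_n}\mid N=n]\right]$ where $Y_n$ is that occupancy count. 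As $d\to\infty$, $n$ distinct balls into $d+1$ boxes: with probability tending to $1$ all $n$ land in distinct boxes, i.e.\ $Y_n\to n$ in distribution, hence the generating function converges pointwise (uniformly on $[0,1]$) to $\mathbb E(s^N)$. Therefore the defining equation for $\psi$ converges to $\mathbb E(s^N)=s$, and a standard continuity argument for smallest fixed points of increasing convex generating functions gives $\psi\to\nu$.

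Second, I would treat the lower bound $\sum_{r=1}^{d+1}(1-\rho^r)\binom{d+1}{r}\sum_{n=r}^\infty\frac{T(n,r)}{(d+1)^n}\mathbb P(N=n)$. Rewrite it as $1-\sum_{r}\rho^r\binom{d+1}{r}\sum_n\frac{T(n,r)}{(d+1)^n}\mathbb P(N=n)-\mathbb P(N=0)$ using $\sum_{r=0}^{n}\binom{d+1}{r}\frac{T(n,r)}{(d+1)^n}=1$; equivalently it equals $1-\mathbb E[\rho^{Y_N}]$ where $Y_N$ is the occupancy count with the convention $\rho^0=1$ absorbing the $N=0$ term. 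Using the same occupancy asymptotics $Y_n\to n$, this converges to $1-\mathbb E[\rho^N]=1-\mathbb E(\rho^N)$. It then remains to show that $\rho\to\nu$ as well. The auxiliary equation defining $\rho$,
\[
\sum_{y=1}^{d}s^y\binom{d}{y}\sum_{n=y}^\infty\frac{T(n,y)+T(n,y+1)}{(d+1)^n}\mathbb P(N=n)=s-\sum_{n=0}^\infty\frac{\mathbb P(N=n)}{(d+1)^n},
\]
needs the same kind of analysis: the extra $T(n,y+1)$ term and the $\binom{d}{y}$ versus $\binom{d+1}{y}$ discrepancy are lower-order corrections (the ratio $\binom{d}{y}/\binom{d+1}{y}\to 1$ and the contribution from the $T(n,y+1)$ piece, after dividing by $(d+1)^n$, vanishes relative to the leading occupancy term), and the right-hand side constant $\sum_n \mathbb P(N=n)/(d+1)^n\to\mathbb P(N=0)$. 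Hence the $\rho$-equation also converges to $\mathbb E(s^N)=s$ and $\rho\to\nu$.

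Finally, combining the two limits: the lower bound tends to $1-\mathbb E(\nu^N)=1-\nu$ (since $\nu$ is a fixed point), and the upper bound $1-\psi$ tends to $1-\nu$, so by the squeeze theorem $\lim_{d\to\infty}\mathbb P(V_d)=1-\nu$. The main obstacle I anticipate is the continuity/stability argument for smallest non-negative fixed points: I need that if a sequence of probability generating functions $g_d(s)$ converges uniformly on $[0,1]$ to $g(s)=\mathbb E(s^N)$, with each $g_d$ convex increasing and $g_d(1)=1$, then the smallest root of $g_d(s)=s$ converges to the smallest root of $g(s)=s$. This is true but requires care at the boundary case where the limiting fixed point equals $1$ (i.e.\ when $\mathbb E(N)\le 1$), and one must also verify the interchange of the $d\to\infty$ limit with the infinite sum over $n$, which follows from dominated convergence since all the occupancy weights are bounded by $\mathbb P(N=n)$ and $\sum_n\mathbb P(N=n)=1$. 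The occupancy estimate itself ($n$ fixed balls into $d+1$ boxes collide with probability $O(n^2/d)$) is elementary and uniform enough to push through.
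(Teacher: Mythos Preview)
Your proposal is correct and follows essentially the same route as the paper: sandwich $\mathbb P(V_d)$ between the survival probabilities of the two auxiliary branching processes (this is exactly where the bounds of Theorem~\ref{T: MCCHPE1} come from), observe that in each the offspring variable $Y_d$ converges in distribution to $N$ as $d\to\infty$, and then pass to the limit in the extinction probabilities. The only organisational difference is that the paper isolates your ``main obstacle'' as a standalone lemma (Proposition~\ref{P: ConvBranching}), proving the continuity of the smallest fixed point by using the stochastic monotonicity $Y_d \preceq Y_{d+1}$ to upgrade pointwise convergence of the generating functions to uniform convergence via Dini's theorem, and then arguing on the fixed points---precisely the ingredient you flag as needing care.
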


\begin{cor}
\label{C: MCC1HPS}
Consider the process $\procpg$. Then
\[ \lim_{d \to \infty} \mathbb{P}(V_d) = \max \left \{ 0, \frac{p(\lambda^2 + \lambda +1)}{\lambda (1+ \lambda p)} \right \}.
\]
\end{cor}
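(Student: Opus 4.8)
The plan is to combine Theorem~\ref{T: MCCHPE1L} with the explicit probability generating function of $N$ given in~\eqref{eq: fgpP}. By Theorem~\ref{T: MCCHPE1L}, $\lim_{d\to\infty}\mathbb{P}(V_d)=1-\nu$, where $\nu$ is the smallest non-negative solution of $\mathbb{E}(s^N)=s$. So the whole computation reduces to analysing the fixed-point equation
\[
\frac{1}{\lambda p+1}\left[1-p+\frac{(\lambda+1)ps}{1+\lambda-\lambda s}\right]=s
\]
on $[0,1]$, and identifying its smallest root $\nu$.

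First I would clear denominators: multiplying through by $(\lambda p+1)(1+\lambda-\lambda s)$ turns the equation into a quadratic in $s$. One root is $s=1$ (as it must be, since $\mathbb{E}(1^N)=1$); factoring it out leaves a linear factor whose root is the other candidate. A short computation should give that the second root equals
\[
s^\ast=\frac{(1-p)(1+\lambda)}{\lambda(1+\lambda p)}
\]
(or an equivalent algebraic form). Then $\nu=\min\{1,s^\ast\}$ provided $s^\ast\ge 0$, which holds here since $0<p<1$ and $\lambda>0$; hence $1-\nu=\max\{0,\,1-s^\ast\}$, and simplifying $1-s^\ast$ over the common denominator $\lambda(1+\lambda p)$ yields the claimed expression $\dfrac{p(\lambda^2+\lambda+1)}{\lambda(1+\lambda p)}$.

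The one point that needs genuine care — rather than routine algebra — is verifying that the root we have singled out really is the \emph{smallest} non-negative solution, i.e.\ that $\nu=\min\{1,s^\ast\}$ and not something smaller. Since $\mathbb{E}(s^N)$ is a power series with non-negative coefficients, it is convex and increasing on $[0,1)$; the function $g(s)=\mathbb{E}(s^N)-s$ is therefore convex, with $g(1)=0$, so $g$ has at most one other zero in $[0,1)$, and that zero is $s^\ast$ when $s^\ast\in[0,1)$ and is absent (so that $\nu=1$) when $s^\ast\ge 1$. Checking $g(0)=\mathbb{E}(0^N)=\mathbb{P}(N=0)=\frac{1-p}{1+\lambda p}>0$ confirms that the relevant root lies in $(0,1]$ and is exactly $\min\{1,s^\ast\}$. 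The dichotomy $s^\ast\ge 1$ versus $s^\ast<1$ is precisely the condition $p(\lambda^2+\lambda+1)\le \lambda(1+\lambda p)$, i.e.\ $p(\lambda+1)\le\lambda$ after simplification, which reproduces the $0$ branch of the $\max$; in the complementary regime the limit is strictly positive. Finally I would note, for consistency, that this $0$-branch threshold matches the boundary case of Corollary~\ref{C: MCC1HP}$(i)$–$(ii)$ as $d\to\infty$.
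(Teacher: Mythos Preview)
Your approach is exactly the paper's: invoke Theorem~\ref{T: MCCHPE1L} and solve $\mathbb{E}(s^N)=s$ using the explicit generating function~\eqref{eq: fgpP}. Your identification of the second root $s^\ast=\dfrac{(1-p)(1+\lambda)}{\lambda(1+\lambda p)}$ via Vieta is correct, and the convexity argument pinning down $\nu=\min\{1,s^\ast\}$ is a nice addition that the paper omits.

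There is, however, an arithmetic slip in the final simplification. Computing
\[
1-s^\ast=\frac{\lambda(1+\lambda p)-(1-p)(1+\lambda)}{\lambda(1+\lambda p)}
=\frac{p(\lambda^2+\lambda+1)-1}{\lambda(1+\lambda p)},
\]
so the numerator carries a $-1$ that you dropped; the stated corollary in fact has the same typo. Consequently your threshold for the $0$ branch is also off: $s^\ast\ge1$ is equivalent to $p(\lambda^2+\lambda+1)\le 1$ (which is precisely $\mathbb{E}(N)\le 1$), not to $p(\lambda^2+\lambda+1)\le\lambda(1+\lambda p)$. With that correction the argument is complete and matches the paper's proof.
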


\begin{exa}
Consider the process $\pgd$. If $d=10$ then
\begin{displaymath}
\mathbb{P}(N = n ) = \left\{
                      \begin{array}{ll}
                        \frac{9}{50}\left(\frac{5}{6} \right )^n, & \hbox{$n \geq 1$;} \\
                        \frac{1}{10}, & \hbox{$n=0$.}
                      \end{array}
                    \right.
\end{displaymath}
By using Theorem~\ref{T: MCCHPE1} we have that $\psi = 0.12226$ and $\rho = 0.143256$.
Then
\[ 0.8733 \leq \mathbb{P}(V_{10}) \leq 0.8778.\]
Besides
\[ \displaystyle \lim_{d \to \infty} \mathbb{P}(V_d) = 0.93. \]
\end{exa}

\begin{cor}
\label{C: MCC1HPS2}
Consider the process $\procyb$. Then
\[ \lim_{d \to \infty} \mathbb{P}(V_d) =1 - \nu
\]
where  $\nu$  is the smallest non-negative solution of
\[ _2 F_1 \left ( 1,1;2 + \frac{1}{\lambda}; p(s-1) + 1  \right ) = \frac{ s(\lambda +1)}{p(s + 1)}.
\]
\end{cor}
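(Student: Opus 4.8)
The plan is to obtain Corollary~\ref{C: MCC1HPS2} as an immediate specialization of Theorem~\ref{T: MCCHPE1L}. That theorem already gives $\lim_{d\to\infty}\mathbb{P}(V_d)=1-\nu$, where $\nu$ is the smallest non-negative root of $\mathbb{E}(s^N)=s$, so all that remains is to write this fixed-point equation explicitly for the Yule growth / binomial catastrophe scheme, i.e.\ for the process $\procyb$.

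First I would insert the probability generating function of $N$ already computed for this scheme, namely
\[
\mathbb{E}(s^N)=\frac{ps+1-p}{\lambda+1}\;{}_2F_1\!\left(1,1;2+\tfrac{1}{\lambda};\,p(s-1)+1\right),
\]
into the equation $\mathbb{E}(s^N)=s$. Dividing both sides by $\dfrac{ps+1-p}{\lambda+1}$, which is strictly positive for all $s\ge 0$ when $0<p<1$ (note $ps+1-p=p(s-1)+1\ge 1-p>0$), isolates the Gauss hypergeometric term on the left and produces exactly the fixed-point equation displayed in the statement. Because the factor removed never vanishes on $[0,\infty)$, the transformed equation has the same non-negative solutions as $\mathbb{E}(s^N)=s$, hence the same smallest one $\nu$, and Theorem~\ref{T: MCCHPE1L} then closes the argument.

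It is worth recording why the equation is genuinely solvable and why, unlike in Corollary~\ref{C: MCC1HPS}, no explicit $\max\{0,\cdot\}$ appears. The map $s\mapsto\mathbb{E}(s^N)$ is convex and increasing on $[0,1]$ with $\mathbb{E}(1^N)=1$ (equivalently ${}_2F_1(1,1;2+\tfrac1\lambda;1)=\lambda+1$, by Gauss's summation theorem) and $\mathbb{E}(0^N)=\mathbb{P}(N=0)\ge 0$, so $\mathbb{E}(s^N)-s$ is convex, non-negative at $s=0$ and zero at $s=1$; hence $s=1$ is always a solution, the smallest non-negative solution $\nu$ lies in $[0,1]$, and $\nu=1$ --- equivalently $\lim_{d\to\infty}\mathbb{P}(V_d)=0$ --- holds precisely when $\mathbb{E}(N)\le 1$, the truncation being absorbed into the phrase ``smallest non-negative solution''. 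Beyond this the proof is pure bookkeeping; the only point requiring care is that clearing the hypergeometric prefactor neither creates nor destroys a root on $[0,\infty)$, and one should not expect a closed form for $\nu$, since $\mathbb{E}(s^N)=s$ is transcendental once ${}_2F_1$ enters --- which is exactly why the corollary is stated implicitly rather than, as in the Poisson/geometric case, as a rational expression obtained by solving a quadratic.
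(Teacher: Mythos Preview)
Your approach is exactly the paper's: invoke Theorem~\ref{T: MCCHPE1L} and substitute the explicit generating function~(\ref{eq: fgpY}) for the Yule/binomial scheme into the fixed-point equation $\mathbb{E}(s^N)=s$. One small caveat: after dividing by $(ps+1-p)/(\lambda+1)$ the right-hand side is $\dfrac{s(\lambda+1)}{ps+1-p}=\dfrac{s(\lambda+1)}{p(s-1)+1}$, not $\dfrac{s(\lambda+1)}{p(s+1)}$ as printed in the statement, so ``produces exactly'' is slightly off---but this is a typo in the displayed equation, not a flaw in your argument.
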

\begin{exa}
Consider the process $\procyb$. If $\lambda = 2$ and $p = 0.5$ then, by using
Corolary~\ref{C: MCC1HPS2}
\[ \displaystyle \lim_{d \to \infty} \mathbb{P}(V_d) = 0.680977.\]
\end{exa}

\subsection{The reach of the process}
\label{SS: RP}

In order to show results for the reach of the process, meaning the distance from the origin
to the furthest vertex where a colony is created, let us define a few technical quantities

\begin{defn}
\begin{align*}
\alpha&= d \left [ 1 -  \mathbb{E} \left [ \left(\frac{d}{d+1} \right)^N \right]  \right ] \\
\beta &= (d+1) \left [ 1 -  \mathbb{E} \left [ \left(\frac{d}{d+1} \right)^N \right]  \right ] = \alpha + 1- \mathbb{E} \left [ \left(\frac{d}{d+1} \right)^N \right]\\
D &= \max \left \{ 2; \frac{\beta}{\beta - \mathbb{P}(N \neq 0)} \right\} \\
B& =d(d-1)\left[ 1 - 2\mathbb{E} \left ( \left(\frac{d }{d+1} \right)^N \right) + \mathbb{E} \left ( \left(\frac{d-1 }{d+1} \right)^N \right) \right]
\end{align*}
\end{defn}

\begin{teo}\label{T: LEIT}
Consider the process $\procT$.  Assume that
\begin{displaymath}
\mathbb{E} \left [ \left(\frac{d}{d+1} \right)^N \right]  > \frac{d-1}{d}
\end{displaymath}
We have that
\begin{displaymath}
\frac{[1+D(1-\beta)][1-\beta^{m+1}]}{1+ D(1-\beta)-\beta^{m+1}} \leq \mathbb{P}(M_d \leq m) \leq \frac{[1 + \frac{\alpha(1-\alpha)}{B}](1-\alpha^{m+1})}{ 1 + \frac{\alpha(1-\alpha)}{B} - \alpha^{m+1}}
\end{displaymath}
and
\begin{displaymath}
 \frac{\alpha^2}{2(B+ \alpha)} + \alpha(1-\alpha)\frac{\ln \left[1 - \frac{\alpha B}{B + \alpha(1-\alpha)}\right]}{B \ln \alpha} \leq \mathbb{E}(M_d) \leq \frac{D \beta}{D+1} + D(1-\beta)\frac{\ln \left[1 - \frac{\beta}{1 + D(1-\beta)}\right]}{\ln \beta}.
\end{displaymath}
\end{teo}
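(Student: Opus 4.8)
The plan is to trap the event $\{M_d\le m\}$ between two branching-type events and then to read off the bounds on $\mathbb{E}(M_d)$ from the tail-sum identity $\mathbb{E}(M_d)=\sum_{m\ge0}\mathbb{P}(M_d>m)$. Note first that the hypothesis $\mathbb{E}[(d/(d+1))^N]>(d-1)/d$ is exactly the statement $\alpha<1$, which is what makes the embedded ``forward'' process subcritical; one also has $\beta<1+1/d$ and $B<\infty$. For the \emph{upper bound on $\mathbb{P}(M_d\le m)$} I would build an embedded Galton--Watson process $\{Z_k\}_{k\ge0}$ recording only forward colonizations: $Z_0=1$ for the origin, and a colony at a vertex $v$ at distance $k$ contributes to $Z_{k+1}$ the number of children of $v$ (among a fixed set of $d$ of them, discarding the extra neighbour of the origin) whose \emph{first} colony is founded directly by a survivor leaving $v$. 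Since the ``founder'' relation on a tree cannot form a cycle, the first colony ever appearing at a vertex at distance $k+1$ is founded by its parent at distance $k$; hence $Z$ is well defined, reproduces independently with the law of the number $L$ of colonized children, and $\{Z_{m+1}>0\}\subseteq\{M_d>m\}$, so $\mathbb{P}(M_d\le m)\le\mathbb{P}(Z_{m+1}=0)$. I would then compute $\mathbb{E}[L]=\alpha$ and $\mathbb{E}[L(L-1)]=B$ by inclusion--exclusion over (pairs of) children, use the Galton--Watson moment formulas $\mathbb{E}[Z_n]=\alpha^{n}$ and $\mathbb{E}[Z_n(Z_n-1)]=B\alpha^{n-1}(1-\alpha^{n})/(1-\alpha)$, and apply the second-moment inequality $\mathbb{P}(Z_{m+1}>0)\ge(\mathbb{E}Z_{m+1})^{2}/\mathbb{E}(Z_{m+1}^{2})$; a direct simplification turns $1$ minus this ratio into the stated right-hand side.

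For the \emph{lower bound on $\mathbb{P}(M_d\le m)$} I would dominate $\procT$ from above by an auxiliary process living on the distance coordinate $\{0,1,2,\dots\}$, in which the occupancy restriction is dropped and each colony at level $k$ is declared to spawn at least as many colonies at levels $k\pm1$ as the true process could ever produce (each survivor projecting to displacement $+1$ with probability $d/(d+1)$ and $-1$ with probability $1/(d+1)$). Then $M_d$ is stochastically below the reach of this auxiliary process, so $\mathbb{P}(M_d\le m)$ is at least the probability that the auxiliary process never visits level $m+1$; writing $g_j$ for this probability started from one colony at level $j$ (with $g_{m+1}=0$), the $g_j$ obey a one-step recursion. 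Bounding the backward contribution crudely and relaxing the true one-step map to the linear--fractional map $\psi(s)=1-D\beta(1-s)/(D+1-s)$, which satisfies $\psi(1)=1$ and $\psi'(1)=\beta$, yields $g_0\ge\psi^{(m+1)}(0)$; the substitution $t_n=1-\psi^{(n)}(0)$ linearizes the iteration into $1/t_{n+1}=\beta^{-1}/t_n+(D\beta)^{-1}$, whose explicit solution is the displayed left-hand side. The constant $D=\max\{2,\beta/(\beta-\mathbb{P}(N\neq0))\}$ is chosen precisely so that $\psi$ falls on the correct side of the true one-step map.

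The \emph{bounds on $\mathbb{E}(M_d)$} then follow by inserting the two bounds above into $\mathbb{E}(M_d)=\sum_{m\ge0}(1-\mathbb{P}(M_d\le m))$. Each $1-\mathbb{P}(M_d\le m)$ is, up to the relevant inequality, of the form $c\,x^{m+1}/(1+c-x^{m+1})$ with $(c,x)=(\alpha(1-\alpha)/B,\alpha)$ on the upper side and $(c,x)=(D(1-\beta),\beta)$ on the lower side. I would isolate the $m=0$ term (which evaluates to $\alpha^{2}/(B+\alpha)$, resp.\ $D\beta/(D+1)$) and compare the remaining terms with $\int c\,x^{t}/(1+c-x^{t})\,dt$, whose antiderivative is $-\ln(1+c-x^{t})/\ln x$; this produces the logarithmic terms, the factor $1/2$ on the lower side coming from the elementary bound $\sum_{m\ge0}f(m)\ge\tfrac12 f(0)+\int_0^\infty f$ for decreasing nonnegative $f$.

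The analytic ingredients (the moment identities, the second-moment inequality, iterating a Möbius map, the integral comparison) are routine; the real work is the two stochastic-domination steps, i.e.\ coupling $\procT$ — with its genuine dependencies, namely colonists dying on occupied vertices, vertices being recolonized, and colonizations moving back toward the origin — with the embedded forward process and with the distance-coordinate over-count. The hard part is arranging the latter over-count so that it \emph{simultaneously} dominates the reach, keeps the one-step map bounded above by a linear--fractional function (so that the iteration closes in the displayed form), and remains valid for every distribution of $N$; this is exactly where the particular value of $D$ is forced.
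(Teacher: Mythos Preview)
Your treatment of the upper bound on $\mathbb{P}(M_d\le m)$ is essentially the paper's argument: your embedded forward process is exactly the auxiliary ``Move Forward or Die'' process $\lowP$, whose offspring $L$ has $\mathbb{E}[L]=\alpha$ and $\mathbb{E}[L(L-1)]=B$, and the Paley--Zygmund second-moment bound you invoke is precisely Agresti's upper bound, which the paper simply cites. The tail-sum/integral comparison you propose for $\mathbb{E}(M_d)$ is also in the spirit of Agresti, whom the paper again cites directly.

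The gap is in your lower bound on $\mathbb{P}(M_d\le m)$. Projecting onto the distance coordinate and dropping the occupancy constraint gives a branching random walk on $\{0,1,2,\dots\}$ with steps $\pm1$; for such a process the non-hitting probabilities $g_j$ satisfy
\[
g_j \;=\; \mathbb{E}\!\left[\Bigl(\tfrac{d}{d+1}\,g_{j+1}+\tfrac{1}{d+1}\,g_{j-1}\Bigr)^{\!N}\right],
\]
a \emph{two-sided} recursion coupling $g_{j-1},g_j,g_{j+1}$ (with an additional boundary issue at level $0$), not a forward iteration of a single map. Your phrase ``bounding the backward contribution crudely'' does not specify a bound that goes in the right direction: replacing $g_{j-1}$ by $1$ makes the right-hand side larger, hence gives an \emph{upper} bound on $g_j$, the opposite of what you need. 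It is therefore unclear how you reduce this boundary-value problem to an iteration of the linear-fractional map $\psi$ with $\psi'(1)=\beta$, and the particular value of $D$ is not forced by anything in this picture.

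The paper sidesteps the whole issue with a single coupling idea: replace the backward neighbour by an \emph{extra forward} neighbour, i.e.\ dominate $\procT$ by the Self-Avoiding process $\upPd$ on $\bbT^{d+1}_+$. This is a genuine Galton--Watson process whose offspring is ``the number of distinct targets chosen by $N$ balls thrown into $d+1$ boxes''; its mean is exactly $\beta$ and its p.g.f.\ is bounded below on $[0,1]$ by the linear-fractional $\psi$ of Agresti with the stated $D$, so the lower bound on $\mathbb{P}(M_d\le m)$ follows by straightforward iteration of $\psi$. In short, what you need is not a finer analysis of the level process but the $\upPd$ coupling that turns the bidirectional dynamics into a one-directional Galton--Watson tree.
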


\begin{cor} \label{C: CTM1}
Consider the process $\procpg$.  If
\[ (\lambda^2d + \lambda d + \lambda +d + 1)p  < \lambda + d + 1\]
then Theorem~\ref{T: LEIT} holds under the values
\begin{align*}
\alpha &= \frac{dp(\lambda + 1)^2}{(d + \lambda + 1)(\lambda p + 1)},\ \beta = \frac{(d+1)p(\lambda + 1)^2}{(d + \lambda + 1)(\lambda p + 1)},\
D = \max \left \{ 2; \frac{(d+1)(\lambda + 1)}{d \lambda} \right\},\\
B& = 2d(d-1) \left [ \frac{(\lambda +1)^2(2(\lambda p + 1) - 1) + (\lambda +1)(p-1)d - (\lambda p+1)d^2}{(d + 2\lambda  + 1)(d + 2 \lambda + 1)(\lambda p + 1)} \right ].
\end{align*}
\end{cor}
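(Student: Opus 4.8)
\emph{Proof proposal.} The plan is to obtain Corollary~\ref{C: CTM1} as a direct specialization of Theorem~\ref{T: LEIT} to the Poisson growth / geometric catastrophe scheme, for which the probability generating function of $N$ is available in the closed form \eqref{eq: fgpP}. Everything then reduces to two tasks: checking that the hypothesis of Theorem~\ref{T: LEIT}, namely $\mathbb{E}[(\tfrac{d}{d+1})^{N}]>\tfrac{d-1}{d}$, follows from the stated inequality, and evaluating the four constants $\alpha,\beta,D,B$ appearing in the Definition that precedes Theorem~\ref{T: LEIT} for this particular law of $N$.

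First I would substitute $s=\tfrac{d}{d+1}$ into \eqref{eq: fgpP}. The factor $1+\lambda-\lambda s$ collapses to $\tfrac{d+\lambda+1}{d+1}$, and after cancellation one gets the clean identity
\[
1-\mathbb{E}\!\left[\left(\tfrac{d}{d+1}\right)^{N}\right]=\frac{p(\lambda+1)^{2}}{(\lambda p+1)(d+\lambda+1)},
\]
so that $\alpha$ and $\beta$ are this quantity times $d$ and $d+1$ respectively, matching the asserted expressions. From here the hypothesis $\mathbb{E}[(\tfrac{d}{d+1})^{N}]>\tfrac{d-1}{d}$ is equivalent to $\alpha<1$, i.e.\ to $\beta<\tfrac{d+1}{d}$; since the assumed inequality $(\lambda^{2}d+\lambda d+\lambda+d+1)p<\lambda+d+1$ is exactly $\beta<1$ (it is the strict complement of \eqref{C:tfp1}), it implies $\beta<\tfrac{d+1}{d}$ and Theorem~\ref{T: LEIT} applies. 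The strict bound $\beta<1$ additionally guarantees $1-\beta>0$, which is what makes the denominators $1+D(1-\beta)-\beta^{m+1}$ and $1+\tfrac{\alpha(1-\alpha)}{B}-\alpha^{m+1}$ in the bounds of Theorem~\ref{T: LEIT} strictly positive.

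Next I would compute $D$ and $B$. For $D=\max\{2,\beta/(\beta-\mathbb{P}(N\neq0))\}$ I would use $\mathbb{P}(N=0)=\tfrac{1-p}{1+\lambda p}$, computed in Section~\ref{S: CCM}, whence $\mathbb{P}(N\neq0)=\tfrac{p(\lambda+1)}{1+\lambda p}$; substituting the value of $\beta$, cancelling the common factor $\tfrac{p(\lambda+1)}{(\lambda p+1)(d+\lambda+1)}$, and using $(d+1)(\lambda+1)-(d+\lambda+1)=d\lambda$ yields $\beta/(\beta-\mathbb{P}(N\neq0))=\tfrac{(d+1)(\lambda+1)}{d\lambda}$, as claimed. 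For $B=d(d-1)\bigl[1-2\mathbb{E}[(\tfrac{d}{d+1})^{N}]+\mathbb{E}[(\tfrac{d-1}{d+1})^{N}]\bigr]$ I would likewise evaluate \eqref{eq: fgpP} at $s=\tfrac{d-1}{d+1}$, where now $1+\lambda-\lambda s=\tfrac{d+2\lambda+1}{d+1}$, obtaining $1-\mathbb{E}[(\tfrac{d-1}{d+1})^{N}]=\tfrac{2p(\lambda+1)^{2}}{(\lambda p+1)(d+2\lambda+1)}$; then, writing the bracket as $2\bigl(1-\mathbb{E}[(\tfrac{d}{d+1})^{N}]\bigr)-\bigl(1-\mathbb{E}[(\tfrac{d-1}{d+1})^{N}]\bigr)$ and combining over the common denominator $(\lambda p+1)(d+\lambda+1)(d+2\lambda+1)$ (using $(d+2\lambda+1)-(d+\lambda+1)=\lambda$), one reaches the closed form of $B$ displayed in the statement.

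The only genuine obstacle here is the bookkeeping in this last step: $B$ emerges as a difference of two rational functions of $\lambda,p,d$, and its numerator must be expanded and recollected carefully before the common factors cancel. Everything else is a routine substitution into the already-established Theorem~\ref{T: LEIT}, and no new probabilistic input is required; the corollary is, in effect, a dictionary entry translating the abstract quantities $\mathbb{E}[(\tfrac{d}{d+1})^{N}]$, $\mathbb{E}[(\tfrac{d-1}{d+1})^{N}]$ and $\mathbb{P}(N\neq 0)$ into explicit functions of $(\lambda,p,d)$ via \eqref{eq: fgpP}.
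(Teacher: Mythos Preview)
Your proposal is correct and follows exactly the paper's approach: the paper's proof is a one-liner stating that the result ``is just a matter of computing the generating function of $N$ (see Equation~(\ref{eq: fgpP})) on both values $s=\tfrac{d}{d+1}$ and $s=\tfrac{d-1}{d+1}$,'' and your write-up simply spells out those substitutions and the ensuing algebra in detail. Your additional remarks (that the assumed inequality is precisely $\beta<1$, hence a fortiori $\alpha<1$, and that this keeps the relevant denominators positive) are correct refinements of the same computation.
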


\begin{teo}\label{T: LEITL}
Consider the process $\procT$. We have that
\begin{displaymath}
M_d \overset{D}{\to} M,
\end{displaymath}
where $\mathbb{P}( M \leq m) = g_{m+1}(0)$, being $g(s) = \mathbb{E}(s^N) $ and $g_{m+1}(s) = \overset{ m+1 \textrm { times }} {g(g(\cdots g(s)) \cdots )}$.
\end{teo}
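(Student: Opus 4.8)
The plan is to prove, for every fixed integer $m\ge 0$, that $\mathbb{P}(M_d\le m)\to g_{m+1}(0)$ as $d\to\infty$; since $M_d$ and $M$ take values in $\mathbb{N}\cup\{\infty\}$ this is exactly the asserted convergence in distribution (note that $g_{m+1}(0)$ is non-decreasing in $m$ and bounded by $1$, hence the distribution function of a possibly defective limit $M$ with $\mathbb{P}(M=\infty)=1-\nu$, $\nu$ the minimal fixed point of $g$; compare Theorem~\ref{T: MCCHPE1L}). The target value is recognized as $\mathbb{P}(H\le m)$, where $H$ is the height of a Galton--Watson tree whose offspring law is that of $N$: if $\{\tilde Z_k\}_{k\ge 0}$ is the associated Galton--Watson process with $\tilde Z_0=1$ and offspring generating function $g(s)=\mathbb{E}(s^N)$, then $\mathbb{P}(\tilde Z_k=0)=g_k(0)$ by the usual induction, so $\mathbb{P}(H\le m)=\mathbb{P}(\tilde Z_{m+1}=0)=g_{m+1}(0)$.

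The heart of the argument is a coupling of $\procT$ with this Galton--Watson tree on one probability space. List the potential colonies $v_1,v_2,\dots$ in a fixed breadth-first order, attach to each an i.i.d.\ copy $N_{v_j}$ of $N$ and, independently, i.i.d.\ uniform neighbour choices for its $N_{v_j}$ survivors; the tree process is driven in the obvious way by these data (a survivor founds a colony at its chosen neighbour unless that site is occupied, in which case it dies), while the Galton--Watson tree uses only the $N_{v_j}$'s. For each $m$ let $B_d^{(m)}$ be the event that every Galton--Watson colony in generations $0,1,\dots,m$ sends its survivors to distinct neighbours, none of which is the neighbour towards the origin. On $B_d^{(m)}$ an induction on the generation shows that the colonies of $\procT$ at graph-distance $\le m$ are exactly the Galton--Watson colonies of generations $\le m$, each at a distinct vertex and dispatching survivors only ``forward''; hence $\procT$ has a colony at distance $m+1$ iff $\tilde Z_{m+1}>0$, so that $\{M_d\le m\}\cap B_d^{(m)}=\{H\le m\}\cap B_d^{(m)}$. (When $\tilde Z_{m+1}>0$, later backward re-colonizations inside the ball of radius $m+1$ are irrelevant since already $M_d\ge m+1$; when $\tilde Z_{m+1}=0$, no such re-colonizations occur.)

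It remains to show $\mathbb{P}(B_d^{(m)})\to 1$. Conditioning on the Galton--Watson tree $\mathcal{T}$ and the values $(N_{v_j})$, the neighbour choices of distinct colonies are independent, so, writing $S_m:=1+\tilde Z_1+\cdots+\tilde Z_m$,
\[
\mathbb{P}\bigl(B_d^{(m)} \,\big|\, \mathcal{T},(N_{v_j})\bigr)\;=\;\prod_{j\,:\,\mathrm{gen}(v_j)\le m}\frac{d(d-1)\cdots(d-N_{v_j}+1)}{(d+1)^{N_{v_j}}}
\]
(the origin contributes an analogous factor, immaterial in the limit). Every factor lies in $[0,1]$ and, for each fixed value of $N_{v_j}$, converges to $1$ as $d\to\infty$; since $N<\infty$ almost surely we have $S_m<\infty$ almost surely, so this random but almost surely finite product tends to $1$ almost surely, and dominated convergence yields $\mathbb{P}(B_d^{(m)})\to 1$. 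Combining with the previous paragraph, $\bigl|\mathbb{P}(M_d\le m)-\mathbb{P}(H\le m)\bigr|\le\mathbb{P}\bigl((B_d^{(m)})^c\bigr)\to 0$, which is the desired limit.

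I expect the main obstacle to be precisely this last step in the regimes where $\mathbb{E}(N)=\infty$ (e.g.\ Yule growth with $\lambda\ge 1$): there the crude first- or second-moment bound on the expected number of ``collisions'' among a colony's survivors, of order $\mathbb{E}[N^2]/d$, diverges and is useless, and one must instead rely on the pathwise convergence $d(d-1)\cdots(d-N+1)/(d+1)^N\to 1$ together with the almost sure finiteness of the truncated Galton--Watson tree, so that dominated convergence can be invoked without any moment estimate. A secondary point requiring care is the structural claim on $B_d^{(m)}$: one must verify that forbidding backward picks and within-colony repeats for all colonies of generations $\le m$ genuinely forces the radius-$(m+1)$ ball of $\procT$ to coincide with the truncated Galton--Watson tree, so that no site inside it is ever contested or re-colonized before the exploration reaches generation $m+1$.
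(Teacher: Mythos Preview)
Your argument is correct, but it follows a genuinely different route from the paper's. The paper does not couple $\procT$ directly with the Galton--Watson tree of offspring law $N$; instead it invokes the two auxiliary processes already built for the other theorems, sandwiching $\procT$ between the \emph{Move Forward or Die} process $\lowP$ and the \emph{Self Avoiding} process $\upPd$. For each of those, $M_d$ is exactly the height of a branching process whose offspring variable $Y_d$ is the number of distinct forward neighbours hit, so $\mathbb{P}(M_d\le m)=g_{Y_d,m+1}(0)$; the paper then uses $Y_d\overset{D}{\to}N$ together with Proposition~\ref{P: CFGP} (convergence of iterated generating functions) to pass to the limit $g_{m+1}(0)$ on both sides of the sandwich. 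Your approach trades this machinery for a single explicit coupling and the event $B_d^{(m)}$, which makes the heuristic ``for large $d$ there are no collisions and no backward steps, so the process looks like a Galton--Watson tree'' completely transparent; it is self-contained and, as you note, requires no moment hypothesis on $N$ because the almost sure finiteness of the truncated tree plus dominated convergence suffices. The paper's route is shorter only because the sandwich and Proposition~\ref{P: CFGP} have already been set up for other results; conceptually your direct coupling is at least as clean, and the structural verification you flag (that on $B_d^{(m)}$ no site inside the ball of radius $m{+}1$ is ever re-colonized before level $m{+}1$ is reached, which follows from the disjointness of forward neighbourhoods in a tree and the fact that each colony is born only after its parent collapses) goes through exactly as you outline.
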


\begin{cor}
\label{C: LEITL}
Consider the process $\procpg$.
\begin{itemize}
\item[$(i)$] If
$ p \neq {(\lambda^2 + \lambda + 1)}^{-1}$ then 
\[ \mathbb{P}(M \leq m) = \frac{1 - \left (\frac{(\lambda + 1)^2 p}{\lambda p + 1} \right )^{m+1}}{1 - \frac{\lambda(\lambda p +1)}{(1-p)(\lambda p + 1)}\left (\frac{(\lambda +1)^2 p}{\lambda p + 1} \right )^{m+1}}, \ m \geq 0
\] and
\[ \mathbb{E}(M)= \frac{(1- p(\lambda^2 + \lambda + 1))}{\lambda (\lambda p + 1))} \lim_{ s \to \infty} \left [ \psi_{\gamma} \left( 1 - \frac{\ln{\frac{(\lambda +1)(1-p)}{\lambda ( \lambda p + 1)}}}{ \ln {\gamma}} \right) - \psi_{\gamma} \left( s - \frac{\ln{\frac{(\lambda +1)(1-p)}{\lambda ( \lambda p + 1)}}}{ \ln {\gamma}} + 1 \right) \right ]  \]
where $\displaystyle \gamma = \frac{(\lambda +1)^2 p}{\lambda p + 1}$ and  
$\displaystyle \psi_a(z) = -\ln(1-a) + \ln(a) \sum_{n=0}^{\infty} \frac{a^{n+z}}{1-a^{n+z}},$
being $\psi_a(z)$ known as the $a$-digama function.\\ 
\item[$(ii)$] If
$ p = {(\lambda^2 + \lambda + 1)}^{-1}$ then
\[ \mathbb{P}(M \leq m) = \frac{(m+1)\lambda}{(m+1)\lambda +1}\]
and
\[ \mathbb{E}(M)= \infty.\]
\end{itemize}
\end{cor}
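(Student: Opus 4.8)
The plan is to combine Theorem~\ref{T: LEITL} with the closed form of the probability generating function of $N$ obtained in $(\ref{eq: fgpP})$, exploiting the fact that, under Poisson growth with geometric catastrophe, this generating function is a \emph{M\"obius (fractional linear) transformation}, so that all of its iterates can be written explicitly. Put $g(s)=\mathbb{E}(s^N)$; by Theorem~\ref{T: LEITL} we have $\mathbb{P}(M\le m)=g_{m+1}(0)$. Clearing denominators in $(\ref{eq: fgpP})$ gives
\[
g(s)=\frac{(2\lambda p+p-\lambda)\,s+(1-p)(\lambda+1)}{-\lambda(\lambda p+1)\,s+(\lambda p+1)(\lambda+1)},
\]
a homography with $g(1)=1$. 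Its second fixed point is $s_2=\dfrac{(1-p)(\lambda+1)}{\lambda(\lambda p+1)}$ (the product of the two roots of the quadratic $g(s)=s$), and the two fixed points coalesce exactly when $g'(1)=\mathbb{E}(N)=\dfrac{(\lambda+1)^2p}{\lambda p+1}=1$, i.e. when $p=(\lambda^2+\lambda+1)^{-1}$. This is precisely the dichotomy between items $(i)$ and $(ii)$.

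For item $(i)$, assume $p\neq(\lambda^2+\lambda+1)^{-1}$, so the fixed points $1$ and $s_2$ are distinct. Conjugating by $\phi(s)=\dfrac{s-1}{s-s_2}$ turns $g$ into the linear map $z\mapsto\gamma z$ with multiplier $\gamma=g'(1)=\mathbb{E}(N)$; hence $\phi\bigl(g_{m+1}(s)\bigr)=\gamma^{m+1}\phi(s)$. Evaluating at $s=0$ (where $\phi(0)=1/s_2$) and solving for $g_{m+1}(0)$ yields, after simplification, the stated formula for $\mathbb{P}(M\le m)$. For the mean I would use $\mathbb{E}(M)=\sum_{m\ge0}\mathbb{P}(M>m)=\sum_{m\ge0}\bigl(1-g_{m+1}(0)\bigr)$; a short computation rewrites the summand as $(s_2-1)\,\dfrac{\gamma^{m+1+k}}{1-\gamma^{m+1+k}}$ with $k=-\ln s_2/\ln\gamma$, and the functional equation $\dfrac{\gamma^{z}}{1-\gamma^{z}}=\dfrac{\psi_\gamma(z)-\psi_\gamma(z+1)}{\ln\gamma}$ satisfied by the $a$-digamma function telescopes the series. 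Passing to the limit of the partial sums, and recording $s_2-1=\dfrac{1-p(\lambda^2+\lambda+1)}{\lambda(\lambda p+1)}$, gives the displayed expression for $\mathbb{E}(M)$.

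For item $(ii)$, set $p=(\lambda^2+\lambda+1)^{-1}$. Substituting this value collapses $g$ to the parabolic map $g(s)=\dfrac{(1-\lambda)s+\lambda}{-\lambda s+(\lambda+1)}$, whose only fixed point is $s=1$. In the coordinate $v=\dfrac{1}{1-s}$ one checks directly that $v\bigl(g(s)\bigr)=v(s)+\lambda$, so $g$ is conjugate to the translation $v\mapsto v+\lambda$; iterating gives $\dfrac{1}{1-g_{m+1}(0)}=1+(m+1)\lambda$, i.e. $\mathbb{P}(M\le m)=\dfrac{(m+1)\lambda}{(m+1)\lambda+1}$. Then $\mathbb{E}(M)=\sum_{m\ge0}\dfrac{1}{(m+1)\lambda+1}=\infty$ by comparison with the harmonic series.

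The conceptual core --- recognizing $g$ as a M\"obius transformation, hence iterable in closed form --- is immediate once $(\ref{eq: fgpP})$ is in hand. The actual work, and the main place where care is needed, is the bookkeeping: correctly locating the second fixed point and the multiplier, performing the conjugation without sign errors, and, above all, recasting $\sum_m\bigl(1-g_{m+1}(0)\bigr)$ into the $a$-digamma form through the telescoping identity. The parabolic case must be separated from the outset, since there the linear conjugation degenerates and one conjugates to a translation instead.
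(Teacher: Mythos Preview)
Your proposal is correct and follows essentially the same route as the paper: both recognise that the generating function $g$ in~(\ref{eq: fgpP}) is fractional linear (Definition~\ref{fgpfl}), so that its iterates $g_{m+1}$ admit a closed form, and then invoke Theorem~\ref{T: LEITL}. The paper simply cites equations~(3.1) and~(3.2) of Agresti~\cite{AA} for those iterates, whereas you rederive them by hand via the standard M\"obius-map conjugation (to a dilation when the fixed points are distinct, to a translation in the parabolic case $p=(\lambda^2+\lambda+1)^{-1}$); the content is the same.
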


\subsection{Number of collonies in the process}
\label{SS: NC}

\begin{teo}
\label{T: NC}
Consider the process $\procT$. If
\begin{displaymath}
\mathbb{E} \left [ \left(\frac{d}{d+1} \right)^N \right]  > \frac{d-1}{d}
\end{displaymath}
then
\begin{displaymath}
\mathbb{E}(I_d) \leq (1 - \beta)^{-1} \textrm { and }
\end{displaymath}
\begin{displaymath}
\mathbb{E}(I_d) \geq \sum_{r=1}^{d+1} \left [[1 + r\theta] \dbinom{d+1}{r} \sum_{n=r}^{\infty}\frac{T(n,r)}{(d+1)^n}\mathbb{P}(N=n) \right] + \mathbb{P}(N=0)
\end{displaymath}
\noindent
where $\theta = (1 - \alpha)^{-1}.$
Besides that, if $\mathbb{E}(N) < 1$ (the subcritical case)
\[
\lim_{d \to \infty} \mathbb{E}(I_d) = \frac{1}{1 - \mathbb{E}(N)}. \]
\end{teo}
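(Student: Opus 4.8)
# Proof proposal for Theorem~\ref{T: NC}

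The plan is to analyze the genealogical structure of the colonies as a multitype branching process, exactly as the bounds in Theorem~\ref{T: MCCHPE1} must have been derived, and then to read off the expected total progeny $\mathbb{E}(I_d)$. First I would set up the embedded branching process: when a colony at a vertex $x$ collapses leaving $N$ survivors, each survivor independently picks one of the $d+1$ (or $d$, away from the origin) neighbours uniformly; a new colony is founded at a neighbour $y$ iff at least one survivor picks $y$ and $y$ is not already occupied along the ancestral line. Ignoring the collisions with already-occupied vertices (which can only reduce the number of colonies) gives a stochastic upper bound by an ordinary branching process in which a colony has a random number of ``children'' colonies equal to the number of \emph{distinct} neighbours chosen among the $d+1$ directions. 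The probability that exactly $r$ of the $d+1$ directions get chosen, given $N=n$ survivors, is $\binom{d+1}{r} T(n,r)/(d+1)^n$ by the surjection count $T(n,r)$; hence the offspring mean of this dominating process is
\[
\bar m \;=\; \sum_{r=0}^{d+1} r \binom{d+1}{r}\sum_{n=r}^{\infty}\frac{T(n,r)}{(d+1)^n}\mathbb{P}(N=n)
\;=\; (d+1)\Bigl[1-\mathbb{E}\bigl[(\tfrac{d}{d+1})^N\bigr]\Bigr]\;=\;\beta,
\]
where the collapse of the sum uses that $\sum_{r} r\binom{d+1}{r}T(n,r)/(d+1)^n$ is the expected number of occupied cells when $n$ balls are thrown into $d+1$ boxes, namely $(d+1)(1-(d/(d+1))^n)$. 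Under the hypothesis $\mathbb{E}[(d/(d+1))^N] > (d-1)/d$ one checks $\beta < 1$ (indeed $\beta<d+1-d\cdot\frac{d-1}{d}=2$ gives only $\beta<2$; the sharper $\beta<1$ is exactly $\mathbb{E}[(d/(d+1))^N]>d/(d+1)$, so I should double-check the paper's intended subcriticality constant—if the weaker $B$-type quantity is what governs extinction, then $\beta<1$ must follow from the stated hypothesis via the bound $D\ge 2$). The total progeny of a subcritical Galton–Watson process with offspring mean $\bar m$ has expectation $(1-\bar m)^{-1}$, giving $\mathbb{E}(I_d)\le (1-\beta)^{-1}$.

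For the lower bound I would argue in the opposite direction: a new colony is \emph{certainly} founded at a fresh neighbour whenever the corresponding direction is selected and that direction is the one leading away from the parent's ancestor — more carefully, I would condition on the first generation from the origin (which has all $d+1$ directions available) producing $r$ colonies with probability $\binom{d+1}{r}T(n,r)/(d+1)^n$ given $N=n$, and then dominate from below the subtree hanging from each of these $r$ colonies by a branching process in which each colony spawns children only into the $d$ ``forward'' directions, with mean offspring $\alpha = d[1-\mathbb{E}[(d/(d+1))^N]]$. The expected progeny of such a forward subtree is $\theta = (1-\alpha)^{-1}$, and adding the root colony contributes the $1+r\theta$ factor; the $\mathbb{P}(N=0)$ term accounts for the event that the origin's colony collapses with no survivors (so $I_d=1$, contributing exactly $1\cdot\mathbb{P}(N=0)$, which is not captured by the $r\ge 1$ sum). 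Summing over $r$ from $1$ to $d+1$ weighted by the origin's offspring law yields the stated lower bound. The only subtlety here is justifying that ignoring forward-collisions is legitimate for a lower bound: collisions in the true process can only \emph{kill} prospective colonies, so the true $I_d$ dominates the collision-free forward process, which is what we want for a lower bound on $\mathbb{E}(I_d)$ — wait, that is backwards; collisions reduce $I_d$, so the collision-\emph{free} process is an upper bound, not a lower bound. The correct reading must be that the lower bound comes from a \emph{thinned} process that only ever founds colonies in genuinely empty cells along a self-avoiding path (a single forward ray per survivor, never reusing a direction), for which no collision is possible by construction; that thinned process is stochastically dominated by the true process, giving the lower bound. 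I would make this coupling explicit.

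For the limit $\lim_{d\to\infty}\mathbb{E}(I_d)=(1-\mathbb{E}(N))^{-1}$ under $\mathbb{E}(N)<1$, I would show both bounds converge to this value. As $d\to\infty$, $\mathbb{E}[(d/(d+1))^N]\to 1-\frac{\mathbb{E}(N)}{d}+o(1/d)$ more precisely $(d+1)(1-\mathbb{E}[(d/(d+1))^N])\to\mathbb{E}(N)$ by dominated convergence (using $N(d/(d+1))^{N-1}\le N$ pointwise and $\mathbb{E}(N)<\infty$), so $\beta\to\mathbb{E}(N)$ and the upper bound $(1-\beta)^{-1}\to(1-\mathbb{E}(N))^{-1}$. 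Likewise $\alpha\to\mathbb{E}(N)$, so $\theta\to(1-\mathbb{E}(N))^{-1}$; and in the lower bound the origin founds a Poisson-like number of colonies whose mean also tends to $\mathbb{E}(N)$ while the probability of multiple survivors landing in the same of infinitely many directions tends to $0$, so the lower bound likewise converges to $1\cdot\mathbb{P}(N=0)+\sum_{r\ge1}(1+r(1-\mathbb{E}(N))^{-1})\mathbb{P}(N=r)\big|_{\text{no collisions}} = (1-\mathbb{E}(N))^{-1}$ after simplification. The main obstacle I anticipate is not any single estimate but making the two couplings (a collision-free \emph{over}-counting process for the upper bound, a self-avoiding \emph{under}-counting process for the lower bound) genuinely rigorous on the tree while keeping the offspring generating functions in the clean surjection-number form $\sum_r\binom{\cdot}{r}T(n,r)/(d+1)^n$; the algebraic identity $\sum_r r\binom{d+1}{r}T(n,r)=(d+1)^{n+1}-d^{n+1}$ (occupancy mean) is the computational keystone that turns the branching-process means into the compact quantities $\alpha,\beta$.
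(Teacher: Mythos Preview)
Your plan is essentially the paper's proof. The paper formalizes your ``collision-free over-counting'' process as the \emph{Self Avoiding} model $\upPd$ (survivors choose among $d+1$ fresh forward neighbours, giving a homogeneous Galton--Watson tree with offspring mean $\beta$ and hence expected total progeny $(1-\beta)^{-1}$), and your ``thinned under-counting'' process as the \emph{Move Forward or Die} model $\low$ (survivors still pick uniformly among $d+1$ neighbours but any who choose the parent direction are killed, yielding forward offspring mean $\alpha$); the lower-bound formula with the root conditioning on $Y_{d,R}=r$ and subtrees of expected size $\theta=(1-\alpha)^{-1}$ is exactly Proposition~\ref{P: CCSRCR1V}. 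Your mid-argument reversal is resolved precisely by this mechanism: the lower process is not ``collision-free forward'' but ``forward-or-die,'' which is dominated by the true process because killing backward migrants can only decrease $I_d$, and on a tree the forward directions are automatically fresh, so no further collision accounting is needed.

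Two remarks. First, your worry that the stated hypothesis $\mathbb{E}[(d/(d+1))^N]>(d-1)/d$ only forces $\alpha<1$ and not $\beta<1$ is well taken; the paper does not address this, and the upper bound $(1-\beta)^{-1}$ is informative only when additionally $\mathbb{E}[(d/(d+1))^N]>d/(d+1)$. Second, for the limit the paper again sandwiches and invokes the limit propositions for the two auxiliary models (Propositions~\ref{P: CCSRSR1VL} and~\ref{P: RCR1VL2}), which is exactly your argument that $\alpha,\beta\to\mathbb{E}(N)$ and $Y_{d,R}\overset{D}{\to}N$; your dominated-convergence justification via $1-(d/(d+1))^n\le n/(d+1)$ is the right tool there.
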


\section{Proofs}
\label{S: Proofs}

In order to prove the main results we define auxiliary processes whose understanding will provide
bounds for the processes defined at introduction. In the first two auxiliary process, denoted by $\up$ and $\upP$,
every time a colony collapses the survival individuals are only allowed to choose neighbour
vertices which are further (compared to the origin) that the vertex where their colony was placed.
In other words an individual is not allowed to choose the neighbour vertex which has been already colonized. We refer to this process as \textit{Self Avoiding}.
The last two auxiliary process, denoted by $\low$ and $\lowP$, while the survival individuals are allowed to choose
the neighbour vertex which has been already colonized, those who does that are not able to colonize it
as this place is considered hostile or infertile. We refer to this process as \textit{Move Forward or Die}.
In both processes, $Y$,  the number of new colonies at collapse times in a vertex $x$ equals the number of diferent neighbours chosen which are located further from the origin than $x$ is. Besides that, every new colony starts with only one individual.

\begin{prop}
\label{P: CFGP}
Consider a sequence of random variables $\{Y_d\}_{ d \in \mathbb{N}}$ whose sequence of probability
generating functions is $\{g_{Y_d}(s)\}_{ d \in \mathbb{N}}$ and a random variable $Y$ such that $Y_d \overset{D}{\to} Y$. Then $g_{Y_d, m}(s)$, the $m-th$ composition of $g_{Y_d}(s)$,  converges to
$ g_{Y,m}(s)$, where $ g_{Y,m}(s)$ is the $m-th$ composition of $g_Y(s)$, the probability
generating function of $Y$.
\end{prop}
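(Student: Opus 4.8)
The plan is to show that pointwise convergence of probability generating functions on $[0,1]$ (which is exactly what $Y_d \overset{D}{\to} Y$ gives us, since the $Y_d$ take values in $\mathbb{N}$) is preserved under $m$-fold self-composition, by induction on $m$. The base case $m=1$ is just the hypothesis $g_{Y_d}(s) \to g_Y(s)$. For the inductive step I would write $g_{Y_d,m+1}(s) = g_{Y_d}\bigl(g_{Y_d,m}(s)\bigr)$ and compare it with $g_{Y,m+1}(s) = g_Y\bigl(g_{Y,m}(s)\bigr)$ through the intermediate quantity $g_{Y_d}\bigl(g_{Y,m}(s)\bigr)$, using the triangle inequality:
\[
\bigl| g_{Y_d,m+1}(s) - g_{Y,m+1}(s) \bigr| \le \bigl| g_{Y_d}\bigl(g_{Y_d,m}(s)\bigr) - g_{Y_d}\bigl(g_{Y,m}(s)\bigr) \bigr| + \bigl| g_{Y_d}\bigl(g_{Y,m}(s)\bigr) - g_{Y}\bigl(g_{Y,m}(s)\bigr) \bigr|.
\]
The second term tends to $0$ by the $m=1$ case evaluated at the fixed point $g_{Y,m}(s) \in [0,1]$. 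For the first term, I would use that every probability generating function is $1$-Lipschitz on $[0,1]$ (its derivative is a pgf-like series with value at most $\mathbb{E}(Y_d) $... actually one must be careful, see below), so it is bounded by $\bigl| g_{Y_d,m}(s) - g_{Y,m}(s) \bigr|$, which tends to $0$ by the inductive hypothesis.

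The one technical subtlety I anticipate is the Lipschitz bound: $g_{Y_d}$ is monotone nondecreasing and convex on $[0,1]$, but its modulus of continuity near $s=1$ need not be uniformly controlled if $\mathbb{E}(Y_d)$ is unbounded. The clean fix is to observe that for $0 \le a \le b \le 1$ one has $0 \le g_{Y_d}(b) - g_{Y_d}(a) = \sum_k \mathbb{P}(Y_d = k)(b^k - a^k) \le \sum_k \mathbb{P}(Y_d = k) \, k \,(b-a)$ only when the mean is finite; to avoid this, instead restrict attention to $s \in [0,1)$ and note that all the iterates $g_{Y,m}(s)$ and $g_{Y_d,m}(s)$ stay in $[0, s_0]$ for some $s_0 < 1$ when $s < 1$ — no, this also fails if the process is supercritical. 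The genuinely robust argument is: $g_{Y_d}$ is nondecreasing, so $|g_{Y_d}(x) - g_{Y_d}(y)| \le g_{Y_d}(\max(x,y)) - g_{Y_d}(\min(x,y))$, and then use that $b^k - a^k \le k(b-a) \le$ — no. The correct robust tool is that a pgf is convex, hence for the relevant application in this paper (where $g = \mathbb{E}(s^N)$ and we evaluate at $s=0$, giving $\mathbb{P}(M \le m) = g_{m+1}(0)$) it suffices to work on a compact subinterval $[0, s^{*}]$ with $s^{*} < 1$ chosen so that $g_Y(s^{*}) \le s^{*}$; when $g_Y$ has a fixed point $\nu < 1$ this is automatic with $s^{*}$ slightly above $\nu$, and on $[0,s^{*}]$ the function $g_{Y_d}$ is Lipschitz with constant $g_{Y_d}'(s^{*})$, which by a second-derivative/convexity comparison is eventually bounded since $g_{Y_d}(s^{*}) \to g_Y(s^{*})$ and $g_{Y_d}(1)=1$. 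I would spell out this compact-interval reduction and then run the induction there.

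So the key steps, in order: (1) record that $Y_d \overset{D}{\to} Y$ for $\mathbb{N}$-valued variables is equivalent to $g_{Y_d} \to g_Y$ pointwise on $[0,1]$, and that each $g_{Y_d}$ is continuous, nondecreasing and convex with $g_{Y_d}(1) = 1$; (2) fix $s$ in the range of interest and identify a compact interval $J = [0, s^{*}] \subset [0,1)$ invariant (eventually, in $d$) under all the maps $g_{Y_d}$ and $g_Y$, on which these maps are uniformly Lipschitz with some constant $L < \infty$; (3) prove by induction on $m$ that $g_{Y_d,m} \to g_{Y,m}$ uniformly on $J$, using the triangle-inequality split above, the Lipschitz bound on the first term and the $m=1$ convergence (applied at the point $g_{Y,m}(s) \in J$) on the second; (4) conclude pointwise convergence $g_{Y_d,m}(s) \to g_{Y,m}(s)$ for every admissible $s$, in particular at $s = 0$. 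The main obstacle, as indicated, is step (2): producing the uniformly Lipschitz invariant interval without assuming finite means, which I would handle via the convexity of pgf's together with the pinning $g_{Y_d}(1)=1$ and the convergence at one interior point.
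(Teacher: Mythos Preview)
Your inductive triangle-inequality route is viable but both different from and more laborious than the paper's argument. The paper simply writes
\[
g_{Y_d,2}(s)=\mathbb{E}\bigl[\bigl(g_{Y_d}(s)\bigr)^{Y_d}\bigr],
\]
observes that the integrand lies in $[0,1]$, and passes the limit inside by a dominated/bounded-convergence argument (citing Thorisson): since $g_{Y_d}(s)\to g_Y(s)$ and $Y_d\overset{D}{\to}Y$, one has $\bigl(g_{Y_d}(s)\bigr)^{Y_d}\overset{D}{\to}\bigl(g_Y(s)\bigr)^{Y}$ (e.g.\ via $Y_d\ln g_{Y_d}(s)\overset{D}{\to}Y\ln g_Y(s)$), and boundedness then gives convergence of the expectations. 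Induction handles general $m$. No Lipschitz estimate, no compact interval, no case analysis.

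Your plan, as written, has a genuine gap at step (2). An interval $J=[0,s^{*}]\subset[0,1)$ with $g_Y(J)\subset J$ exists only when $g_Y$ has a fixed point $\nu<1$, i.e.\ only in the supercritical regime; when $\mathbb{E}(Y)\le 1$ one has $g_Y(s)>s$ for every $s\in(0,1)$ and no such invariant $J$ exists, so the scheme you outline does not cover exactly the (sub)critical case where the proposition is still needed (Theorem~\ref{T: LEITL} carries no supercriticality assumption). The fix is to drop the global invariance: for fixed $s<1$ and fixed $m$, the inductive hypothesis already gives $g_{Y_d,m}(s)\to g_{Y,m}(s)<1$, so eventually both $g_{Y_d,m}(s)$ and $g_{Y,m}(s)$ lie in $[0,s^{*}_m]$ with $s^{*}_m:=\tfrac{1}{2}\bigl(g_{Y,m}(s)+1\bigr)<1$; convexity of $g_{Y_d}$ together with $g_{Y_d}(1)=1$ then yields
\[
g_{Y_d}'(t)\le\frac{1-g_{Y_d}(s^{*}_m)}{1-s^{*}_m}\quad\text{for }t\le s^{*}_m,
\]
and the right-hand side is eventually bounded because $g_{Y_d}(s^{*}_m)\to g_Y(s^{*}_m)$. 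That controls your first term and closes the induction. So your approach can be repaired, but the paper's bounded-convergence move is cleaner and avoids the whole detour.
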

\begin{proof}[Proof of Proposition~\ref{P: CFGP}]
From the fact that  $Y_d \overset{D}{\to} Y$ it follows that  
$\displaystyle g_Y(s) = \lim_{d \to \infty} g_{Y_d}(s)$.
 \[ \lim_{d \to \infty} g_{Y_d, 2}(s) = \lim_{d \to \infty} g_{Y_d}(g_{Y_d}(s)) = \lim_{d \to \infty} \mathbb{E} \left [ \left ( \mathbb{E}(s^{Y_d}) \right)^{Y_d}  \right ]  \]

From the Dominated Convergence Theorem~\cite[Theorem 9.1 page 26]{Thorisson} (observe that $[\mathbb{E}(s^{Y_d})]^{Y_d} \in [0,1]$)
 \[ \lim_{d \to \infty} \mathbb{E} \left [ [\mathbb{E}(s^{Y_d})]^{Y_d} \right ] =  \mathbb{E} \left [ \lim_{d \to \infty} [\mathbb{E}(s^{Y_d})]^{Y_d} \right].
 \]

Again, from the Dominated Convergence Theorem~\cite[Theorem 9.1 page 26]{Thorisson} 
(observe that $s^{Y_d} \in [0,1]$ and that $Y_d$ converges to $Y$ in distribution) $Y_d  \ln \mathbb{E}(s^{Y_d})$ converges in distribution to $Y \ln \mathbb{E}(s^Y).$
So we conclude that
\[ \mathbb{E} \left [ \lim_{d \to \infty}  \left ( \mathbb{E}(s^{Y_d}) \right)^{Y_d}  \right ] = \mathbb{E} \left [ [ \mathbb{E}(s^Y) ]^Y  \right ]
 \]
and then
 \[ \lim_{d \to \infty} g_{Y_d, 2}(s)= \lim_{d \to \infty} \mathbb{E} \left [ \left ( \mathbb{E}(s^{Y_d}) \right)^{Y_d}  \right ]  = \mathbb{E} \left [ [ \mathbb{E}(s^Y) ]^Y  \right ] = g_{Y,2}(s).
 \]

By induction one can prove that $\displaystyle\lim_{d \to \infty} g_{Y_d, m}(s) = g_{Y,m}(s).$
\end{proof}

\begin{prop}
\label{P: ConvBranching}
Let $\{Z_n \}_{n \geq 0}, \{Z_{1,n} \}_{n \geq 0}, \{Z_{2,n} \}_{n \geq 0}, \cdots$ be a branching
processes and $Y, Y_1, Y_2, \cdots $, respectively, their offspring distributions. Supose that
\begin{enumerate}
\item[(i)] $Y_d \overset{D}{\to} Y$;
\item[(ii)] $\mathbb{P}(Y_d \geq k) \leq \mathbb{P}(Y_{d+1} \geq k)$, for all $k$ and for all $d$;
\end{enumerate}
Then, if $\nu_d$ is the pro\-ba\-bi\-li\-ty of the extinction of the process $\{Z_{d,n} \}_{n \geq 0}$  and $\nu$ is the pro\-ba\-bi\-li\-ty of the extinction of the process $\{Z_{n} \}_{n \geq 0}$ we have that
\[ \lim_{d \to \infty}\nu_d = \nu.
\]
\end{prop}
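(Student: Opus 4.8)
The plan is to control the extinction probabilities $\nu_d$ through the classical fact that each $\nu_d$ is the smallest non-negative fixed point of the offspring generating function $g_{Y_d}$, and then pass to the limit using the two hypotheses. First I would record that hypothesis (i) gives pointwise convergence $g_{Y_d}(s)\to g_Y(s)$ on $[0,1]$ (this is just convergence in distribution for $\bbN$-valued random variables, and is already used in the proof of Proposition~\ref{P: CFGP}), and that hypothesis (ii) — stochastic monotonicity $Y_d\preceq Y_{d+1}$ — forces the generating functions to be \emph{decreasing} in $d$ for each fixed $s\in[0,1]$, since $g_{Y_d}(s)=\sum_k s^k\,\mathbb{P}(Y_d=k)=1-(1-s)\sum_{k\ge 1}s^{k-1}\mathbb{P}(Y_d\ge k)$ and each tail $\mathbb{P}(Y_d\ge k)$ is nondecreasing in $d$. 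The same monotonicity of the $Y_d$ implies the extinction probabilities $\nu_d$ themselves form a nonincreasing sequence (a branching process with stochastically larger offspring is less likely to go extinct), hence $\nu_d\downarrow \nu_\infty$ for some limit $\nu_\infty\in[0,1]$; the goal is to show $\nu_\infty=\nu$.

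The core of the argument is a two-sided comparison. For the inequality $\nu_\infty\le\nu$: since $g_{Y_d}$ is decreasing in $d$, for every fixed $d$ we have $g_{Y_d}(s)\ge g_Y(s)$ for all $s\in[0,1]$, so the branching process $\{Z_{d,n}\}$ is dominated (in the stochastic order, generation by generation) by $\{Z_n\}$; dominating a process that dies out with probability $\nu$ gives $\nu_d\ge\nu$, and letting $d\to\infty$ yields $\nu_\infty\ge\nu$. Wait — I should be careful with the direction: stochastically \emph{smaller} offspring means \emph{larger} extinction probability, and hypothesis (ii) says $Y_d$ is smaller than $Y$, so indeed $\nu_d\ge\nu$ for every $d$, hence $\nu_\infty\ge\nu$. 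For the reverse inequality $\nu_\infty\le\nu$, I would use the fixed-point characterization: $\nu_d=g_{Y_d}(\nu_d)$. Using $g_{Y_d}(s)\downarrow g_Y(s)$ uniformly on compact subsets of $[0,1)$ (monotone convergence of continuous functions to a continuous limit on a compact set is uniform, by Dini's theorem) together with $\nu_d\downarrow\nu_\infty$, pass to the limit in $\nu_d=g_{Y_d}(\nu_d)$ to get $\nu_\infty=g_Y(\nu_\infty)$, so $\nu_\infty$ is \emph{a} fixed point of $g_Y$. Combined with $\nu_\infty\ge\nu$ and the fact that $\nu$ is the \emph{smallest} non-negative fixed point, one needs only to rule out the possibility that $\nu_\infty$ is the ``wrong'' fixed point, i.e.\ that $\nu<1<\nu_\infty$ is impossible since both lie in $[0,1]$, so the only way $\nu_\infty$ can be a fixed point strictly above $\nu$ is $\nu_\infty=1$ in the supercritical-looking case; this is excluded because $g_{Y_d}(s)\le g_{Y_{d+1}}(s)$ fails...

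Let me restate the endgame cleanly, since that is the delicate point. Having shown $\nu_\infty$ is a fixed point of $g_Y$ with $\nu_\infty\ge\nu$, I invoke the standard structure of branching-process generating functions: on $[0,1]$, $g_Y$ has at most two fixed points, namely $\nu$ and $1$ (with $\nu<1$ iff the process is supercritical, $\nu=1$ otherwise). So $\nu_\infty\in\{\nu,1\}$. To eliminate $\nu_\infty=1$ when $\nu<1$, I argue that if $\mathbb{E}(Y)>1$ then, since $\mathbb{E}(Y_d)\to\mathbb{E}(Y)$ fails in general under mere convergence in distribution, I instead use a truncation: fix $s_0\in(\nu,1)$ with $g_Y(s_0)<s_0$; by pointwise convergence $g_{Y_d}(s_0)\to g_Y(s_0)<s_0$, so for $d$ large $g_{Y_d}(s_0)<s_0$, which forces $\nu_d\le s_0$; letting $d\to\infty$ gives $\nu_\infty\le s_0<1$, hence $\nu_\infty=\nu$. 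In the case $\nu=1$ there is nothing to prove since $\nu_d\ge\nu=1$ forces $\nu_d=1$ for all $d$. The main obstacle is precisely this last step — convergence in distribution does not transfer convergence of means, so the supercritical case must be handled by the generating-function inequality $g_{Y_d}(s_0)<s_0$ at a well-chosen point $s_0$ rather than by comparing expected offspring; once that is in place, monotonicity of $\nu_d$ and Dini's theorem make the passage to the limit routine.
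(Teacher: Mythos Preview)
Your argument is correct and tracks the paper's proof closely: both establish $\nu_d\downarrow\nu_\infty\ge\nu$ from the stochastic ordering in (ii), use (i) for pointwise convergence of the generating functions, invoke Dini's theorem to upgrade to uniform convergence, and pass to the limit in the fixed-point equation $\nu_d=g_{Y_d}(\nu_d)$ to conclude that $\nu_\infty$ is a fixed point of $g_Y$, hence $\nu_\infty\in\{\nu,1\}$.

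The only substantive difference is in the endgame, where you must exclude $\nu_\infty=1$ when $\nu<1$. The paper splits on whether some $\nu_d<1$: if so, $\nu_\infty<1$ immediately; if $\nu_d=1$ for all $d$, then $\mathbb{E}(Y_d)\le 1$ for all $d$, and a Fatou-type argument on the sequence $j\,\mathbb{P}(Y_d=j)$ yields $\mathbb{E}(Y)\le\liminf_d\mathbb{E}(Y_d)\le 1$, forcing $\nu=1$. Your route is more direct and avoids any appeal to convergence of means: pick $s_0\in(\nu,1)$, where convexity gives $g_Y(s_0)<s_0$, use pointwise convergence to get $g_{Y_d}(s_0)<s_0$ for large $d$, and conclude $\nu_d\le s_0$ via the intermediate value theorem applied to $g_{Y_d}(s)-s$. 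This is cleaner and in fact makes the Dini step optional --- your $s_0$ argument with $s_0\downarrow\nu$ already yields $\nu_\infty\le\nu$ without ever identifying $\nu_\infty$ as a fixed point. The paper's version, by contrast, genuinely needs the uniform convergence to justify $\lim_d g_{Y_d}(\nu_d)=g_Y(\nu_\infty)$.
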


\begin{proof}[Proof of Proposition~\ref{P: ConvBranching}]
From \textit{(i)}, \textit{(ii)} and by using a coupling argument we have that

\begin{equation}\label{1}
 \nu_d \ge \nu_{d+1} \ge \lim_{d \to \infty} \nu_d =: \nu_L  \ge \nu.
\end{equation}

From the fact that $Y_d \overset{D}{\to} Y$ and \cite[Theorem 25.8, page 335]{Billingsley} we have that 

\begin{equation}\label{A}
\phi_d(s):=\mathbb{E}[s^{Y_d}]\underset{d\to\infty}\longrightarrow\mathbb{E}[s^Y] := \phi(s).
\end{equation}

Let $s\in[0,1]$ fixed and $f(y):=s^y, \ y\in\mathbb{N}$. Clearly, $f$ is non-increasing and therefore from \textit{(ii)} and \cite[equation (3.3), page 6]{Thorisson} we have that 
\begin{equation}\label{B}
\phi_{d+1}(s)\leq \phi_{d}(s).
\end{equation}

\noindent
From (\ref{A}), (\ref{B}) and Dini's Theorem, we have that 
\begin{equation}\label{C}
\phi_{d} (\cdot) \longrightarrow \phi(\cdot)  \text{ uniformly.}
\end{equation}

\noindent
From (\ref{1}), (\ref{C}) and \cite[Exercise 9 - Chapter 7]{Rudin}:
\begin{equation}\label{D}
\lim_{d\to \infty}\phi_d(\nu_d)=\phi(\nu_L)
\end{equation}

\noindent
Finally, given that $\phi_d(\nu_d)=\nu_d$, from (\ref{D}) we obtain that 
\begin{equation}\label{E}
\phi(\nu_L)=\nu_L.
\end{equation}

\noindent
From the convexity of $\phi(s)$ it follows that $\phi(s) = s$ (the fixed points of $\phi(\cdot)$) for at most two points in $[0,1]$. It is known that [see~\cite{Harris2002}, Theorem 6.1 and its proof] if $\nu<1$, the fixed points of $\phi(\cdot)$ are  $s=\nu$ and $s=1$. If $\nu=1$, the unique solution is 1. 
So there are two cases to be considered.

\textbf{1.} If $\nu_d < 1$ for some $d\geq 1$, then from (\ref{1}) it follows that $\nu_L < 1$. If $\nu_L < 1$, it follows from (\ref{E}) that $\nu_L = \nu$.

\textbf{2.}  If $\nu_d = 1$ for all $d\geq1$, then
\[ \mathbb{E}(Y_d) \leq 1 \textrm { for all } d\geq1. \]
Then,
\begin{equation} \label{lim}
	\lim_{d \to \infty} \mathbb{E}(Y_d) \leq 1.
\end{equation}

From \textit{(ii)} we have that $\mathbb{P}(Y_d \geq k) \leq \mathbb{P}(Y \geq k)$, for all $k$ and all $d$. From \textit{(i)}, \textit{(ii)} and a non standart version of Fatou Lemma~\cite[page 230]{Ash} (applied to the sequence $a_{d,j} = j \bbP(Y_d=j)$), it follows that

\begin{equation} \label{TCD}
\liminf_{d \to \infty} \mathbb{E}(Y_d) \ge \mathbb{E}(Y).
\end{equation}

From (\ref{lim}) and (\ref{TCD}), it follows that $\nu=1.$ Then, from (\ref{E}) we have that $\nu_L=\nu.$
\end{proof}

\subsection{$\up$: The Self Avoiding model}
\hfill

\begin{prop}
\label{P: CCSRSR3}
Consider the process $\up$. $\mathbb{P}(V_d) > 0 $ if and only if
\begin{displaymath}
\mathbb{E} \left [ \left(\frac{d - 1}{d} \right)^N \right]  < \frac
{d-1}{d}
\end{displaymath}
\end{prop}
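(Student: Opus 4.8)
\textbf{Proof proposal for Proposition~\ref{P: CCSRSR3}.}

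The plan is to recognize the Self Avoiding process $\up$ as (up to a size-biasing at the root) a Galton--Watson branching process and to apply the classical extinction criterion $\mathbb{E}(Y)\le 1$ versus $\mathbb{E}(Y)>1$, where $Y$ is the offspring law. In $\up$, the origin has $d+1$ neighbours, but each non-root vertex has only $d$ neighbours that are further from the origin than it is (its ``forward'' neighbours), since the edge back toward the origin is forbidden. So I would first set up the embedded branching process: a colony at a non-root vertex $x$ collapses leaving $N$ survivors, each of which independently picks one of the $d+1$ incident neighbours uniformly, but only the picks landing on one of the $d$ forward neighbours (and, among those, only distinct forward neighbours) count as new colonies. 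Hence the offspring random variable $Y$ is the number of distinct forward neighbours hit when $N$ balls are thrown independently and uniformly into the $d+1$ slots (with $1/(d+1)$ probability each of landing on any fixed forward slot).

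The key computation is $\mathbb{E}(Y)$. By linearity over the $d$ forward neighbours, $\mathbb{E}(Y) = d\cdot \mathbb{P}(\text{a fixed forward neighbour is hit at least once})$. Conditioning on $N$, a fixed forward neighbour is \emph{missed} by all $N$ survivors with probability $\left(\frac{d}{d+1}\right)^N$, so
\[
\mathbb{E}(Y) = d\left[1 - \mathbb{E}\left(\left(\tfrac{d}{d+1}\right)^N\right)\right].
\]
Wait --- I should double check which exponent appears in the statement: the claimed criterion involves $\mathbb{E}\left[\left(\frac{d-1}{d}\right)^N\right] < \frac{d-1}{d}$, i.e. $d\left[1-\mathbb{E}\left(\left(\frac{d-1}{d}\right)^N\right)\right] < d-1$, which is not quite $\mathbb{E}(Y)<1$ with the $\frac{d}{d+1}$ above. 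The resolution is that in $\up$ a survivor only produces a \emph{new} colony if the forward neighbour it picks is not simultaneously picked (and claimed) by another survivor of the same colony --- no, that still gives the $\frac{d}{d+1}$ count. The correct reading is that the relevant ``reproductive'' event is: a survivor goes forward \emph{and} is the one that founds that colony; but more to the point, one must be careful that the process $\up$ as defined forbids choosing the already-colonized neighbour, so effectively each survivor makes a uniform choice among the $d$ \emph{remaining} neighbours of its vertex, not $d+1$. With $d$ equally likely forward slots, a fixed one is missed with probability $\left(\frac{d-1}{d}\right)^N$, giving $\mathbb{E}(Y) = d\left[1-\mathbb{E}\left(\left(\frac{d-1}{d}\right)^N\right)\right]$, and the stated criterion is exactly $\mathbb{E}(Y) < 1$. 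So the first step is to pin down this offspring distribution precisely, and the main subtlety is exactly this bookkeeping of how many slots the survivors choose among.

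With $Y$ identified, the argument is then standard: the colonies at forward vertices form a Galton--Watson tree with offspring law $Y$ (the root contributes a slightly different first generation because the origin has $d+1$ neighbours, but this only changes a finite multiplicative factor and cannot affect whether the extinction probability is $<1$). By the classical dichotomy, the process survives with positive probability if and only if $\mathbb{E}(Y) > 1$, i.e. $d\left[1 - \mathbb{E}\left(\left(\frac{d-1}{d}\right)^N\right)\right] > 1$, equivalently $\mathbb{E}\left[\left(\frac{d-1}{d}\right)^N\right] < \frac{d-1}{d}$. The only point needing a line of care is the boundary/degenerate case: if $N\equiv 0$ a.s.\ then $\mathbb{E}(Y)=0$ and there is trivially no survival, consistent with $\mathbb{E}\left[\left(\frac{d-1}{d}\right)^N\right]=1 \not< \frac{d-1}{d}$; and one should note the offspring law is not a.s.\ constant equal to $1$ (so the usual "$\mathbb{E}(Y)=1 \Rightarrow$ extinction" conclusion is valid), which holds because hitting $0$ or $\ge 2$ forward neighbours always has positive probability once $\mathbb{P}(N\ge 1)>0$.

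\textbf{Main obstacle.} The genuinely delicate part is not the branching-process theory but the combinatorial set-up: correctly arguing that $\up$ reduces to a Galton--Watson process with the offspring $Y$ described above --- in particular justifying that the Self Avoiding rule makes the numbers of new colonies at distinct vertices independent with the same law $Y$ (using the tree structure so that forward cones never interact), and correctly counting that each survivor chooses among $d$ forward neighbours, which is what produces the ratio $\frac{d-1}{d}$ rather than $\frac{d}{d+1}$. Everything after that is the textbook extinction criterion.
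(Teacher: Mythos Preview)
Your approach is correct and matches the paper's: identify $\up$ with a Galton--Watson process whose offspring $Y_d$ counts the distinct forward neighbours hit when $N$ survivors each choose uniformly among the $d$ forward slots, compute $\mathbb{E}(Y_d)=d\bigl[1-\mathbb{E}\bigl((\tfrac{d-1}{d})^N\bigr)\bigr]$ by linearity over the $d$ indicators, and invoke the classical extinction criterion. The paper disposes of the root asymmetry by passing to $\upP$ (origin of degree $d$) and noting that $\up$ and $\upP$ survive or die together, whereas you argue directly that a one-off different first generation cannot change whether extinction has probability $1$; both routes are fine and essentially equivalent.

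One small slip: your justification that $Y$ is not a.s.\ equal to $1$ ``because hitting $0$ or $\ge 2$ forward neighbours always has positive probability once $\mathbb{P}(N\ge 1)>0$'' fails when $N\equiv 1$ (then $Y\equiv 1$ deterministically and the process trivially survives, while the stated inequality becomes an equality). The paper's proof does not address this degenerate boundary case either, so this is not a discrepancy with the paper, just a point where your added care is not quite airtight.
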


\begin{proof}[Proof of Proposition~\ref{P: CCSRSR3}] First of all observe that for a fixed distribution for $N$, the processes $\up$ and $\upP$ either both survives or both die. Next observe that the process
$\upP$ behaves as a homogeneous branching process. Every vertex $x$ which is colonized produces
$Y_d$ new colonies (whose distribution depends only on $N$) on the $d$ neighbour vertices which
located are further from the origin than $x$ is.
By conditioning one can see that
\begin{equation}
\label{E: MediaY}
 \mathbb{E}(Y_d) = d \sum_{n=0}^{\infty} \left [\left(1 - \left(\frac{d-1}{d}\right)^n\right)\mathbb
{P}(N = n) \right] =  d  \left[ 1-  \mathbb{E}\left [\left(\frac{d-1}{d} \right)^N \right] \right].
\end{equation}

From the theory of homogeneous branching processes we see that $\upP$ (and also $\up$) survives if and only if $\mathbb{E}[ (\frac{d - 1}{d} )^N ]  < \frac
{d-1}{d}.$
\end{proof}

\begin{prop}
\label{P: CCSRSR1V}
Consider the process $\up$. Then
\begin{displaymath}
\mathbb{P}(V_d) = \sum_{r=1}^{d+1}\left [(1 - \psi^r)\binom{d+1}
{r}\sum_{n=r}^{\infty}\frac{T(n,r)}{(d+1)^n}\mathbb{P}(N=n) \right]
\end{displaymath}
where $\psi$, the extinction probability for the process $\upP$, is the smallest non-negative solution of
\begin{displaymath}
\sum_{y=0}^{d} \left[ s^y\binom{d}{y}\sum_{n=y}^{\infty}\frac{T(n,y)}
{d^n}\mathbb{P}(N=n) \right] = s.
\end{displaymath}
On the sub critical regime, which means
\begin{displaymath}
\mathbb{E} \left [ \left(\frac{d - 1}{d} \right)^N \right]  > \frac
{d-1}{d},
\end{displaymath}
it holds that
\begin{displaymath}
\mathbb{E}(I_d) = \sum_{r=1}^{d} \left [[1 + r\theta_u]\dbinom{d+1}{r} \sum_
{n=r}^{\infty}\frac{T(n,r)}{(d+1)^n}\mathbb{P}(N=n) \right] + \mathbb{P}(N=0)
\end{displaymath}
where
\begin{displaymath}
\theta_u = \left \{ 1- d  \left [ 1 - \mathbb{E} \left ( \left(\frac{d -
1}{d} \right)^N \right) \right ]\right\}^{-1}.
\end{displaymath}
\end{prop}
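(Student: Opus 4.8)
The plan is to analyze the process $\up$ (Self Avoiding model) as a tree-indexed branching process where the root vertex is special because it has $d+1$ available neighbours while every subsequently colonized vertex has only $d$ available forward neighbours. By Proposition~\ref{P: CCSRSR3} and its proof, each non-root colonized vertex $x$ produces $Y_d$ new colonies, where $Y_d$ is the number of distinct forward neighbours (among $d$ of them) chosen by the $N$ survivors; the associated probability generating function is $g(s) = \sum_{y=0}^{d} s^y\binom{d}{y}\sum_{n=y}^{\infty}\frac{T(n,y)}{d^n}\mathbb{P}(N=n)$, since $T(n,y)$ counts surjections from the $n$ survivors onto a chosen set of $y$ neighbours, and there are $\binom{d}{y}$ such sets and $d^n$ equally likely assignments. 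Thus $\psi$, the extinction probability of $\upP$, is the smallest non-negative fixed point of $g(s)=s$, which is exactly the stated equation.

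Next I would compute $\mathbb{P}(V_d)$ for $\up$ itself. The origin has $d+1$ neighbours; conditionally on $N=n$, the number $R$ of distinct neighbours colonized from the origin has distribution $\mathbb{P}(R=r \mid N=n) = \binom{d+1}{r}\frac{T(n,r)}{(d+1)^n}$ for $r = 0,1,\dots,d+1$. Given that $R = r$ vertices are colonized at the first step, each spawns an independent copy of the $\upP$ branching process, so the whole process survives iff at least one of these $r$ subtrees survives, which happens with probability $1-\psi^r$. Summing over $n$ and $r$ (the $r=0$ term contributing zero) gives
\begin{displaymath}
\mathbb{P}(V_d) = \sum_{n=0}^{\infty}\mathbb{P}(N=n)\sum_{r=1}^{d+1}\binom{d+1}{r}\frac{T(n,r)}{(d+1)^n}(1-\psi^r),
\end{displaymath}
and interchanging the sums yields the claimed formula.

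For the expected number of colonies $\mathbb{E}(I_d)$ in the subcritical regime, I would use the linearity of expectation over the branching structure. In $\upP$, the total progeny $W$ of a branching process with mean offspring $m = \mathbb{E}(Y_d) = d[1 - \mathbb{E}((\tfrac{d-1}{d})^N)] < 1$ satisfies $\mathbb{E}(W) = (1-m)^{-1} = \theta_u$ by the standard first-step decomposition $\mathbb{E}(W) = 1 + m\,\mathbb{E}(W)$. For $\up$, conditioning on $N=n$ and on $R=r$ vertices colonized from the origin, the total number of colonies is $1$ (the origin) plus $r$ independent copies of $W$, so $\mathbb{E}(I_d \mid N=n, R=r) = 1 + r\theta_u$; taking expectations over $r$ (with the degenerate $r=0$ case on the event $N=0$ handled by the extra $\mathbb{P}(N=0)$ term and absorbed correctly into the sum starting at $r=1$) and then over $n$ gives the stated expression. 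The main obstacle I anticipate is bookkeeping the combinatorics of $T(n,r)$ and $\binom{d+1}{r}$ cleanly — in particular verifying that the conditional law of the number of distinct colonized neighbours is given by the surjection count normalized by $(d+1)^n$, and correctly isolating the $N=0$ contribution in the $\mathbb{E}(I_d)$ formula; the branching-process facts themselves (extinction via smallest fixed point, total progeny mean via $(1-m)^{-1}$) are standard and can be cited.
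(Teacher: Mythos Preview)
Your proposal is correct and follows essentially the same approach as the paper: condition on the number $R$ (the paper's $Y_{d,R}$) of distinct neighbours colonized from the origin, compute its law via the surjection count $T(n,r)$, identify the $r$ resulting subtrees as independent copies of $\upP$, and then invoke the standard branching-process facts (extinction probability as the least fixed point of the offspring p.g.f., total progeny mean $(1-m)^{-1}$ in the subcritical case). The paper cites Stirzaker for the total-progeny identity $\mathbb{E}(I_d\mid Y_{d,R}=r)=1+r\theta_u$, whereas you derive it by the first-step decomposition; otherwise the arguments coincide.
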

\begin{proof}[Proof of Proposition~\ref{P: CCSRSR1V}]
Let $Y_{d,R}$ be the number of colonies created at the neighbour vertices of the origin from
its colony at the collapse time. Then
\begin{displaymath}
\mathbb{P}(V_d)  = \sum_{r=0}^{d+1} \mathbb{P}(V_d| Y_{d,R} = r) \mathbb{P}
(Y_{d,R} = r)
\end{displaymath}
where
\begin{displaymath}
\mathbb{P}(Y_{d,R}=r) =\sum_{n=r}^{\infty} \left[ \mathbb{P}(N=n) \frac
{\dbinom{d+1}{r}T(n,r)}{(d+1)^n} \right] \textrm { for } r =0,1,2,
\cdots, d+1.
\end{displaymath}
because
\begin{displaymath}
\mathbb{P}(Y_{d,R}=r | N = n) =\frac
{\dbinom{d+1}{r}T(n,r)}{(d+1)^n}.
\end{displaymath}

Given that $Y_{d,R} = r$ one have $r$ independent $\upP$ processes living on $r$ independent
rooted trees. Every vertex $x$ which is colonized, on some of these trees, right after the collapse will
have $N$ survival individuals. These individuals will produce
$Y_d$ new colonies (whose distribution depends only on $N$) on the $d$ neighbour vertices which are
located further from the origin than $x$ is. So we have that
\begin{displaymath}
\mathbb{P}(Y_d=y | N = n) = \frac{\dbinom{d}{y}T(n,y)}{d^n}.
\end{displaymath}
From this,
\begin{displaymath}
\mathbb{P}(Y_d=y) =\sum_{n=y}^{\infty} \left[ \mathbb{P}(N=n) \frac
{\dbinom{d}{y}T(n,y)}{d^n} \right] \textrm { for } y =0,1,2, \cdots,
d,
\end{displaymath}
and
\begin{displaymath}
\mathbb{E}(s^{Y_d})  = \sum_{y=0}^{d} \left[ s^y\binom{d}{y}\sum_{n=y}^
{\infty}\frac{T(n,y)}{d^n}\mathbb{P}(N=n) \right].
\end{displaymath}
Then $\mathbb{P}(V_d^C | Y_{d,R} = r) = \psi^r \hbox{ for } r =0,1,2,
\cdots, d+1$ and
\begin{displaymath}
\mathbb{P}(V_d) = \sum_{r=1}^{d+1}\left [(1 - \psi^r)\binom{d+1}
{r}\sum_{n=r}^{\infty}\frac{T(n,r)}{(d+1)^n}\mathbb{P}(N=n) \right]
\end{displaymath}
As for the second part of the proposition
\begin{displaymath}
\mathbb{E}(I_d)  = \sum_{r=0}^{d+1} \mathbb{E}(I_d| Y_{d,R} = r) \mathbb{P}
(Y_{d,R}= r).
\end{displaymath}
Besides that, $\mathbb{E}(I_d| Y_{d,R} = r) = r\theta_u + 1$ 
(see Stirzaker~\cite[Exercise 2b, page 280]{Stirzaker}).
\end{proof}

\begin{prop}
\label{P: CCSRSR1VL}
Consider the process $\up$. Then
\begin{equation}
\label{limPVD}
\lim_{d \to \infty} \mathbb{P}(V_d) = 1 - \nu
\end{equation}
where  $\nu$ is the smallest non-negative solution of $ \mathbb{E}(s^N) = s$. Besides that, if 
$\mathbb{E}(N) < 1$ (the subcritical case) then
\begin{equation}
\label{limEID}
\lim_{d \to \infty} \mathbb{E}(I_d) = \frac{1}{1 - \mathbb{E}(N)}.
\end{equation}
\end{prop}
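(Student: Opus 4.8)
The plan is to push the whole proposition through the homogeneous branching process that governs $\upP$, which already appears in Proposition~\ref{P: CCSRSR1V}. Recall from that proof that a colonized vertex produces $Y_d$ new colonies, where $\mathbb{P}(Y_d=y\mid N=n)=\binom{d}{y}T(n,y)/d^n$ is exactly the probability that $n$ survivors, choosing independently and uniformly among $d$ forward neighbours, occupy $y$ distinct ones; the origin instead produces $Y_{d,R}$ colonies, with $\mathbb{P}(Y_{d,R}=r\mid N=n)=\binom{d+1}{r}T(n,r)/(d+1)^n$ (now $d+1$ neighbours); and $\psi_d$ denotes the extinction probability of the branching process with offspring law $Y_d$, i.e.\ of $\upP$. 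So $Y_d$ and $Y_{d,R}$ are the occupancy numbers of $N$ balls dropped into $d$, resp.\ $d+1$, cells, and the guiding heuristic is that as $d\to\infty$ collisions between survivors become negligible, so $Y_d\overset{D}{\to}N$ and $Y_{d,R}\overset{D}{\to}N$; combined with $\psi_d\to\nu$ this should give both assertions.

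First I would establish the two hypotheses needed to apply Proposition~\ref{P: ConvBranching} to the branching processes with offspring $Y_d$ (and, in parallel, note the analogous facts for $Y_{d,R}$). For (i): for fixed $n$ a birthday-type estimate gives $\mathbb{P}(Y_d=y\mid N=n)\to\mathbf{1}\{y=n\}$, and dominated convergence over $n$ then yields $\mathbb{P}(Y_d=y)\to\mathbb{P}(N=y)$, i.e.\ $Y_d\overset{D}{\to}N$. For (ii), i.e.\ $\mathbb{P}(Y_d\ge k)\le\mathbb{P}(Y_{d+1}\ge k)$, it suffices after conditioning on $N$ to prove the same for $W(n,d)$, the number of occupied cells for $n$ balls in $d$ cells; conditioning on the number $M$ of those $n$ balls landing in cell $d+1$ one has $W(n,d+1)\overset{D}{=}W(n-M,d)+\mathbf{1}\{M\ge 1\}$, and since $W(n,d)\le W(n-m,d)+m$ deterministically, $\mathbb{P}(W(n,d)\ge k)\le\mathbb{P}(W(n-m,d)\ge k-1)$ for every $m\ge1$; averaging this against the $M=0$ term gives the inequality. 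Proposition~\ref{P: ConvBranching} then yields $\psi_d\to\nu$, the smallest non-negative root of $\mathbb{E}(s^N)=s$; moreover (ii) makes $s\mapsto\mathbb{E}(s^{Y_{d,R}})$ non-increasing in $d$, so the convergence $\mathbb{E}(s^{Y_{d,R}})\to\mathbb{E}(s^N)$ is uniform on $[0,1]$ by Dini's theorem.

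For the survival probability I would rewrite the identity of Proposition~\ref{P: CCSRSR1V} as $\mathbb{P}(V_d)=1-\sum_{r\ge 0}\psi_d^{\,r}\,\mathbb{P}(Y_{d,R}=r)=1-g_d(\psi_d)$, where $g_d(s):=\mathbb{E}(s^{Y_{d,R}})$; since $g_d\to g$ uniformly on $[0,1]$ with $g(s):=\mathbb{E}(s^N)$, and $\psi_d\to\nu$, continuity of $g$ gives $g_d(\psi_d)\to g(\nu)=\nu$, hence $\mathbb{P}(V_d)\to 1-\nu$. For the expected number of colonies, the formula of Proposition~\ref{P: CCSRSR1V} can be written $\mathbb{E}(I_d)=1+\theta_u\,\mathbb{E}(Y_{d,R})$ with $\theta_u=(1-\mathbb{E}(Y_d))^{-1}$, and by \eqref{E: MediaY} (and its root analogue) $\mathbb{E}(Y_d)=\mathbb{E}\!\bigl[d(1-(1-1/d)^N)\bigr]$ and $\mathbb{E}(Y_{d,R})=\mathbb{E}\!\bigl[(d+1)(1-(1-1/(d+1))^N)\bigr]$. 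Since $d\mapsto d(1-(1-1/d)^n)$ is non-decreasing and converges to $n$ for every $n$, monotone convergence yields $\mathbb{E}(Y_d)\uparrow\mathbb{E}(N)$ and $\mathbb{E}(Y_{d,R})\uparrow\mathbb{E}(N)$; when $\mathbb{E}(N)<1$ we have $\mathbb{E}(Y_d)\le\mathbb{E}(N)<1$ for all $d$, so the subcritical formula holds throughout, $\theta_u\to(1-\mathbb{E}(N))^{-1}$, and $\mathbb{E}(I_d)\to 1+\mathbb{E}(N)/(1-\mathbb{E}(N))=1/(1-\mathbb{E}(N))$.

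I expect the only genuinely delicate point to be the stochastic ordering $Y_d\le_{st}Y_{d+1}$ in step (ii): occupancy numbers are \emph{not} almost surely monotone under the naive ``refine the cells'' coupling, so one really has to run the overflow-conditioning argument above, using the deterministic bound $W(n,d)\le W(n-m,d)+m$. The remaining pieces — the birthday estimate, the interchanges of limit with sum, the monotonicity of $d(1-(1-1/d)^n)$, and the Dini/continuity bookkeeping needed to pass the limit inside $g_d(\psi_d)$ — are routine.
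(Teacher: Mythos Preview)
Your overall plan matches the paper's: invoke Proposition~\ref{P: ConvBranching} for the offspring $Y_d$ to get $\psi_d\to\nu$, handle the root via $Y_{d,R}\overset{D}{\to}N$, and for $\mathbb{E}(I_d)$ pass to the limit in $1+\theta_u\,\mathbb{E}(Y_{d,R})$. The paper's own proof is much terser --- it simply asserts $Y_d\overset{D}{\to}N$ and $Y_{d,R}\overset{D}{\to}N$, cites Proposition~\ref{P: ConvBranching} and dominated convergence, and does not explicitly verify hypothesis~(ii) or the convergence of means --- so your added detail (the Dini step for the root, the monotone-convergence argument for $\mathbb{E}(Y_d)\uparrow\mathbb{E}(N)$) is genuinely useful.

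There is, however, a real error in your verification of~(ii). From the coupling bound $W(n,d)\le W(n-m,d)+m$ you can deduce only $\mathbb{P}(W(n,d)\ge k)\le\mathbb{P}(W(n-m,d)\ge k-m)$, and for $m\ge 2$ this does \emph{not} give $\mathbb{P}(W(n,d)\ge k)\le\mathbb{P}(W(n-m,d)\ge k-1)$: take $d=n=m=k=2$, where the latter would read $\tfrac12\le 0$. The stochastic ordering you need is nonetheless true and is most cleanly seen by revealing the balls sequentially: given $j$ cells already occupied, the next ball lands in a fresh cell with probability $(d-j)/d$, and $(d+1-j)/(d+1)-(d-j)/d=j/[d(d+1)]\ge 0$, so the increment probabilities are non-decreasing in $d$; a standard monotone coupling of the two Markov chains then yields $\mathbb{P}(W(n,d)\ge k)\le\mathbb{P}(W(n,d+1)\ge k)$ for all $k$, and integrating over $N$ gives hypothesis~(ii). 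With this repair the remainder of your argument goes through unchanged.
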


\begin{proof}[Proof of Proposition~\ref{P: CCSRSR1VL}]
In order to prove~(\ref{limPVD}) one has to apply Proposition~\ref{P: ConvBranching}, observing that $Y_d \overset{D}{\to} N$ and $Y_{d,R} \overset{D}{\to} N.$
Moreover to prove~(\ref{limEID}) observe that
\[
\lim_{d \to \infty} \mathbb{E}(I_d) = \lim_{d \to \infty} \sum_{r=0}^{d+1} \mathbb{E}(I_d|Y_{d,R}=r)\bbP(Y_{d,R}=r).\]
As  $Y_d \overset{D}{\to} N$ and $Y_{d,R} \overset{D}{\to} N $ then
\[ \lim_{d \to \infty}  \mathbb{E}(I_d|Y_{d,R}=r) = \lim_{d \to \infty} r\theta_u +1 = \frac{r}{1-\mathbb{E}(N)}+1 \]
and the result follows from the Dominated Convergence Theorem~\cite[Theorem 9.1 page 26]{Thorisson}.
\end{proof}

\begin{prop} \label{P: LIMTSRSR1}
Consider the process $\upP$.
Assuming
\begin{displaymath}
\mathbb{E} \left [ \left(\frac{d - 1}{d} \right)^N \right]  > \frac
{d-1}{d}
\end{displaymath}
we have that
\begin{displaymath}
\frac{[1+D(1-\mu)][1-\mu^{m+1}]}{1+ D(1-\mu)-\mu^{m+1}} \leq \mathbb{P}(M_d \leq m) \leq \frac{[1 + \frac{\mu(1-\mu)}{B}](1-\mu^{m+1})}{ 1 + \frac{\mu(1-\mu)}{B} - \mu^{m+1}}
\end{displaymath}
and
\begin{displaymath}
 \frac{\mu^2}{2(B+ \mu)} + \mu(1-\mu)\frac{\ln \left[1 - \frac{\mu B}{B + \mu(1-\mu)}\right]}{B \ln \mu} \leq \mathbb{E}(M_d) \leq \frac{D \mu}{D+1} + D(1-\mu)\frac{\ln \left[1 - \frac{\mu}{1 + D(1-\mu)}\right]}{\ln \mu}
\end{displaymath}
where
\begin{align*}
\mu &= d \left [ 1 -  \mathbb{E} \left [ \left(\frac{d}{d+1} \right)^N \right]  \right ] \\
D &= \max \left \{ 2; \frac{g^{\prime}(1)}{g^{\prime}(1) - \mathbb{P}(N \neq 0)} \right\} \\
B & = d(d-1)\left[ 1 - 2\mathbb{E} \left ( \left(\frac{d-1 }{d} \right)^N \right) + \mathbb{E} \left ( \left(\frac{d-2 }{d} \right)^N \right) \right].
\end{align*}
Moreover,
\begin{displaymath}
M_d \overset{D}{\to} M,
\end{displaymath}
where $\mathbb{P}( M \leq m) = g_{m+1}(0)$, being $g(s) = \mathbb{E}(s^N) $ and $g_{m+1}(s) = \overset{ m+1 \textrm { times }} {g(g(\cdots g(s)) \cdots )}$.
\end{prop}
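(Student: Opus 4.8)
The plan is to recognise $\upP$ as a homogeneous Galton--Watson process and to read $M_d$ off as a shifted height of its genealogical tree. As in the proofs of Propositions~\ref{P: CCSRSR3} and \ref{P: CCSRSR1V}, put $Z_0=1$ and let $Z_n$ be the number of colonies ever created at distance $n$ from the origin; then $\{Z_n\}_{n\ge0}$ is a Galton--Watson process whose offspring variable $Y_d$ has $\mathbb{P}(Y_d=y)=\binom{d}{y}\sum_{n\ge y}\frac{T(n,y)}{d^n}\mathbb{P}(N=n)$, and $M_d=\sup\{n:Z_n>0\}$, so that $\{M_d\le m\}=\{Z_{m+1}=0\}$ and hence $\mathbb{P}(M_d\le m)=h_{m+1}(0)$, where $h(s):=\mathbb{E}(s^{Y_d})$ and $h_{m+1}$ is its $(m+1)$-fold self-composition. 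The last assertion of the proposition then follows immediately: the proof of Proposition~\ref{P: CCSRSR1VL} notes that $Y_d\overset{D}{\to}N$, so Proposition~\ref{P: CFGP} gives $h_{m+1}(0)\to g_{m+1}(0)$ with $g(s)=\mathbb{E}(s^N)$, which is exactly $M_d\overset{D}{\to}M$ with $\mathbb{P}(M\le m)=g_{m+1}(0)$. I would then record the three quantities driving the bounds: $h(0)=\mathbb{P}(N=0)$, so $1-h(0)=\mathbb{P}(N\ne0)$; $h'(1)=\mathbb{E}(Y_d)$, which we denote $\mu$; and $h''(1)=\mathbb{E}\bigl(Y_d(Y_d-1)\bigr)=B$, the last two by inclusion--exclusion over the occupied forward neighbours. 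The standing hypothesis is exactly $\mu<1$.

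For the two-sided bounds on $\mathbb{P}(M_d\le m)$ I would pass to $u_m:=1-h_m(0)=\mathbb{P}(Z_m>0)$, which satisfies $u_0=1$ and $u_{m+1}=\varphi(u_m)$ with $\varphi(u):=1-h(1-u)$. Since $h$ is an increasing convex probability generating function, $\varphi$ is increasing and concave on $[0,1]$, with $\varphi(0)=0$, $\varphi'(0)=h'(1)=\mu$ and $\varphi(1)=1-h(0)=\mathbb{P}(N\ne0)$; moreover $h''$ is non-decreasing on $[0,1]$ (because $h'''\ge0$), so $h''(\xi)\in[h''(0),B]$ for $\xi\in[0,1]$. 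Combining the tangent and chord inequalities for the convex $h$ at $s=1$ with its second-order Taylor expansion with remainder, I would turn the recursion for $v_m:=1/u_m$ into two-sided control of the shape $\tfrac1\mu v_m\le v_{m+1}\le\tfrac1\mu v_m+R_+$ together with $v_{m+1}\ge\tfrac1\mu v_m+R_-$, uniformly in $m$, where $R_+$ is built from $B$ (and $\mu$) and $R_-$ from $\mu$ and $\mathbb{P}(N\ne0)$; the truncation ``$\max\{2,\cdot\}$'' in $D$ enters exactly here, to force the lower increment bound to hold for every $m$ rather than only for $m$ large. Each of these is a linear first-order recursion with explicit solution; inverting $v_m\mapsto u_m$ and replacing $m$ by $m+1$ produces precisely the two rational functions of $\mu^{m+1}$ in the statement. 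Equivalently, this step sandwiches $h$ on $[0,1]$ between two fractional-linear generating functions --- whose iterates, hence whose extinction-time distributions, are explicit --- the upper one tuned to the curvature $B$ at $s=1$, the lower one to $\mu$ and $h(0)$.

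For $\mathbb{E}(M_d)$ I would use $\mathbb{E}(M_d)=\sum_{m\ge0}\mathbb{P}(M_d>m)=\sum_{m\ge0}\bigl(1-\mathbb{P}(M_d\le m)\bigr)=\sum_{m\ge0}u_{m+1}$ and insert the bounds just obtained. The summands there are positive, decreasing and convex in $m$ (they behave like geometric sequences), so a monotone comparison gives $\sum_{m\ge0}(\cdot)\le(\text{the }m=0\text{ term})+\int_0^\infty(\cdot)\,dt$, while the trapezoidal inequality (valid by convexity) gives $\sum_{m\ge0}(\cdot)\ge\tfrac12(\text{the }m=0\text{ term})+\int_0^\infty(\cdot)\,dt$; the two boundary terms are $\tfrac{D\mu}{D+1}$ and $\tfrac{\mu^2}{2(B+\mu)}$. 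The remaining integrals are elementary: the substitution $x=\mu^{t+1}$ converts $\int_0^\infty\frac{c\,\mu^{t+1}}{1+c-\mu^{t+1}}\,dt$ into $\frac{c}{\ln\mu}\ln\!\bigl(1-\tfrac{\mu}{1+c}\bigr)$, and plugging in $c=D(1-\mu)$ and $c=\tfrac{\mu(1-\mu)}{B}$ reproduces the logarithmic terms in the two displayed bounds for $\mathbb{E}(M_d)$.

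The one genuinely delicate point is the second step: converting the local behaviour of $h$ at $s=1$ into the two-sided bounds on the recursion that are uniform in $m$, with precisely the constants $\mu$, $B$ and the cut-off in $D$ --- equivalently, making the global fractional-linear domination on all of $[0,1]$ go through with the correct parameters. The monotonicity of $h''$ makes the upper side clean; the lower side is what forces the case distinction encoded in $D=\max\{2,\cdot\}$. Everything else --- the Galton--Watson reduction, the two factorial-moment identities, and the sum-to-integral comparisons --- is routine.
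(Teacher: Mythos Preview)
Your approach is correct and, at the structural level, matches the paper's: reduce $\upP$ to a Galton--Watson process with offspring $Y_d$, identify $M_d$ with the extinction generation minus one, compute the first two factorial moments of $Y_d$ via the indicator decomposition $Y_d=\sum_{i=1}^d I_i$, and get the convergence $M_d\overset{D}{\to}M$ from $Y_d\overset{D}{\to}N$ together with Proposition~\ref{P: CFGP}. The difference is that the paper does not carry out your second and third steps at all: once it has $g_{Y_d}'(1)$ and $g_{Y_d}''(1)$ (the latter from $\mathbb{E}(Y_d^2)=d\,\mathbb{E}(I_1^2)+d(d-1)\,\mathbb{E}(I_1 I_2)$), it simply invokes Theorem~1 of Agresti~\cite{AA}, which packages exactly the fractional-linear sandwich and the resulting bounds on both $\mathbb{P}(M_d\le m)$ and $\mathbb{E}(M_d)$. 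What you sketch --- the recursion $u_{m+1}=1-h(1-u_m)$, the two-sided linear control on $v_m=1/u_m$, the role of the cut-off in $D$, and the sum-to-integral comparison for $\mathbb{E}(M_d)$ --- is essentially a reconstruction of Agresti's proof; it is self-contained and instructive, but unnecessary once \cite{AA} is on the table. Your identification $h(0)=\mathbb{P}(Y_d=0)=\mathbb{P}(N=0)$ is correct for this model (if $N\ge1$ at least one forward neighbour is hit), and your reading $\mu=g_{Y_d}'(1)=\mathbb{E}(Y_d)$ is the intended one, consistent with \eqref{E: MediaY}.
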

\begin{proof}[Proof of Proposition~\ref{P: LIMTSRSR1}]

Every vertex $x$ which is colonized produces
$Y_d$ new colonies (whose distribution depends only on $N$) on the $d$ neighbour vertices which
are located further from the origin than $x$ is. The random variable $Y_d$ can be seen as $Y_d = \sum_{i=1}^{d}I_i$ where for $i=1, \dots, d$
\begin{displaymath}
I_i = \left\{%
\begin{array}{ll}
1, & \hbox{the $i-th$ neighbour of $x$ is colonized} \\
0, & \hbox{else.} \\
\end{array}%
\right.
\end{displaymath}
Defining $g_{Y_d}(s)$ as the generating function of $Y_d$ observe that equation~(\ref{E: MediaY}) gives $g_{Y_d}^{\prime}(1)$. Moreover
\begin{displaymath}
{Y_d}^2 = \left (\sum_{i=1}^{d}I_i \right )^2 = \sum_{i=1}^{d}I_i^2 + 2\sum_{1 \leq i < j \leq d}I_iI_j
\end{displaymath}
and
\begin{displaymath}
 \mathbb{E} \left({Y_d}^2 \right ) =  d \mathbb{E} \left (I_1^2 \right ) + d(d-1) \mathbb{E}(I_1I_2)
\end{displaymath}
and finally
\begin{displaymath}
 \mathbb{E} \left({Y_d}^2 \right ) =  d \left[ 1 - \mathbb{E} \left [\left( \frac{d-1}{d} \right)^N \right] \right]   + d(d-1) \left [  1 -2  \mathbb{E} \left [\left ( \frac{d-1}{d} \right )^N \right] + \mathbb{E} \left [\left ( \frac{d-2}{d} \right )^N \right] \right ].
\end{displaymath}
Then
\begin{align*}
g_{Y_d}^{\prime \prime}(1) = \mathbb{E} \left(Y_d(Y_d-1) \right ) =  d(d-1) \left [  1 -2  \mathbb{E} \left [\left ( \frac{d-1}{d} \right )^N \right] + \mathbb{E} \left [\left ( \frac{d-2}{d} \right )^N \right] \right ].
\end{align*}
Then the result follows from Theorem 1 page 331 in~\cite{AA}, where $ m = g_{Y_d}^{\prime}(1)$. 

The convergence $M_d \overset{D}{\to} M$ follows from the fact that $Y_d \overset{D}{\to} N$ when $ d \to \infty$ and from Proposition \ref{P: CFGP}.
\end{proof}

\subsection{{$\low$: Move Forward or Die}}
\hfill

\begin{prop}
\label{P: CCSRCR3}
Consider the process $\low$. $\mathbb{P}(V_d) > 0 $ if and only if
\begin{displaymath}
\mathbb{E} \left [ \left(\frac{d}{d+1} \right)^N \right]  < \frac{d-
1}{d}
\end{displaymath}
\end{prop}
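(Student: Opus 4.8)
The plan is to recognize $\low$ as a branching process and to read off its survival threshold from the mean offspring, exactly in the spirit of the proof of Proposition~\ref{P: CCSRSR3}. Since in the ``Move Forward or Die'' rule a survivor that picks the neighbour toward the origin dies, and since $\bbT^d$ has no cycles, a vertex different from the origin is colonized at most once; hence $\low$ is genuinely a tree\-/indexed branching process with independent colonies, and I only need to track offspring numbers.

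First I would note that, for a fixed law of $N$, the processes $\low$ and $\lowP$ either both survive or both die: in either case the colony at the origin founds at least one colony on a neighbouring vertex with positive probability (as soon as $\mathbb{P}(N\geq 1)>0$), and from that neighbour onward the two processes evolve in exactly the same way, so it suffices to treat $\lowP$. Then I would observe that, once a colony has been founded at a vertex $x$ other than the origin, $x$ has $d+1$ neighbours, exactly one of them toward the origin; at the collapse each of the $N$ survivors chooses one of the $d+1$ neighbours uniformly and independently, a survivor choosing the ``backward'' neighbour dies, and $Y_d$ --- the number of colonies spawned from $x$ --- is the number of \emph{distinct} forward neighbours chosen. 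Consequently every colonized non\-/origin vertex produces $Y_d$ new colonies with the same distribution, i.e.\ $\lowP$ is a homogeneous branching process below its first generation.

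Next I would compute $\mathbb{E}(Y_d)$ by conditioning on $N$ and summing, over the $d$ forward neighbours, the indicator that a given one is picked by at least one of the $N$ survivors:
\begin{displaymath}
\mathbb{E}(Y_d\mid N)=d\left[1-\left(\frac{d}{d+1}\right)^{N}\right],
\qquad
\mathbb{E}(Y_d)=d\left[1-\mathbb{E}\left(\left(\frac{d}{d+1}\right)^{N}\right)\right].
\end{displaymath}
I would then check that $Y_d$ is not the degenerate offspring law concentrated at $1$ --- indeed $\mathbb{P}(Y_d=0)\geq \mathbb{E}[(d+1)^{-N}]>0$ whenever $\mathbb{P}(N\geq 1)>0$, and $Y_d\equiv 0$ if $N\equiv 0$ --- so the classical Galton--Watson dichotomy applies and the homogeneous part survives with positive probability if and only if $\mathbb{E}(Y_d)>1$. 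Since $\mathbb{E}(Y_d)>1$ already forces $\mathbb{E}[(d/(d+1))^N]<1$, hence $\mathbb{P}(N\geq 1)>0$ and the origin does seed the process, one obtains
\begin{displaymath}
\mathbb{P}(V_d)>0 \iff \mathbb{E}(Y_d)>1 \iff \mathbb{E}\left[\left(\frac{d}{d+1}\right)^{N}\right]<\frac{d-1}{d},
\end{displaymath}
which is the assertion.

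I expect the only delicate point to be the non\-/homogeneity at the origin, whose offspring law differs from $Y_d$ ($N$ uniform picks among $d+1$ neighbours in $\low$, among $d$ in $\lowP$), so the Galton--Watson criterion cannot be quoted verbatim. I would handle this as in Proposition~\ref{P: CCSRSR3}, by remarking that the origin charges $\{\text{offspring}\geq 1\}$ with positive probability precisely in the supercritical regime of the homogeneous part, so the seed generation neither creates nor destroys survival; everything else reduces to the linearity\-/of\-/expectation evaluation of $\mathbb{E}(Y_d)$ and the standard branching\-/process threshold.
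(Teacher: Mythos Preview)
Your proposal is correct and follows essentially the same approach as the paper: reduce to $\lowP$, recognize it as a homogeneous Galton--Watson process with offspring $Y_d$, compute $\mathbb{E}(Y_d)=d\bigl[1-\mathbb{E}\bigl((\tfrac{d}{d+1})^{N}\bigr)\bigr]$ by linearity over the $d$ forward neighbours, and invoke the standard survival criterion $\mathbb{E}(Y_d)>1$. You are in fact slightly more careful than the paper in explicitly ruling out the degenerate case $Y_d\equiv 1$ and in justifying why the inhomogeneous root generation does not affect the survival dichotomy.
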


\begin{proof}[Proof of Proposition~\ref{P: CCSRCR3}]
First of all observe that for a fixed distribution for $N$, the processes $\low$ and $\lowP$ either both survives or both die. Next observe that the process
$\lowP$ behaves as a homogeneous branching process. Every vertex $x$ which is colonized produces
a bunch of survival individuals right after the collapse which are willing to jump to one of the $d+1$
nearest neighbours vertices of $x$. All those which jump towards the origin get killed. So,
$Y_d$ new colonies will be found  on the $d$ neighbour vertices which are located further
from the origin than $x$ is.
By conditioning one can see that
\begin{equation}
\label{E: MediaYL}
 \mathbb{E}(Y_d) = d\sum_{n=0}^{\infty} \left [\left(1 - \left(\frac{d}{d+1}\right)^n\right)\mathbb
{P}(N = n) \right] =  d  \left[ 1-  \mathbb{E} \left [\left(\frac{d}{d+1} \right)^N \right ]\right]
\end{equation}

From the theory of homogeneous branching processes we see that $\lowP$ (and also $\low$) survives if and only if $\mathbb{E} \left [ \left(\frac{d}{d+1} \right)^N \right]  < \frac{d-1}{d}.$
\end{proof}

\begin{prop} \label{P: CCSRCR1V}
Consider the process $\low$. Then
\begin{displaymath}
\mathbb{P}(V_d) = \sum_{r=1}^{d+1}\left [(1 - \rho^r)\binom{d+1}
{r}\sum_{n=r}^{\infty}\frac{T(n,r)}{(d+1)^n}\mathbb{P}(N=n) \right]
\end{displaymath}
where $\rho$, the extinction probability for the process $\lowP$, is the smallest non-negative solution of
\begin{displaymath}
\sum_{y=0}^{d} \left[ s^y\binom{d}{y}\sum_{n=y}^{\infty}\frac{T(n,y)+
T(n,y+1)}{(d+1)^n}\mathbb{P}(N=n) \right] = s.
\end{displaymath}
On the subcritical regime, which means
\begin{displaymath}
\mathbb{E} \left [ \left(\frac{d}{d+1} \right)^N \right]  > \frac{d-
1}{d},
\end{displaymath}
it holds that
\begin{displaymath}
\mathbb{E}(I_d) = \sum_{r=1}^{d+1} \left [[1 + r\theta_l] \dbinom{d+1}{r}
\sum_{n=r}^{\infty}\frac{T(n,r)}{(d+1)^n}\mathbb{P}(N=n) \right] + \mathbb{P}(N=0)
\end{displaymath}
where
\begin{displaymath}
\theta_l = \left \{ 1- d  \left [ 1 - \mathbb{E} \left ( \left(\frac{d }{d
+1} \right)^N \right) \right ]\right\}^{-1}.
\end{displaymath}
\end{prop}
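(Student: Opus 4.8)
The plan is to follow closely the argument of the Self Avoiding case in Proposition~\ref{P: CCSRSR1V}, modifying the bookkeeping to account for the fact that in $\low$ a survivor may choose the neighbour towards the origin and then die. As already observed in the proof of Proposition~\ref{P: CCSRCR3}, $\low$ and $\lowP$ survive together and $\lowP$ is a homogeneous branching process: every colonized vertex $x$ produces $Y_d$ new colonies, where, conditionally on having $N$ survivors, each survivor picks one of the $d+1$ neighbours of $x$ uniformly at random, the unique neighbour towards the origin is lethal, and $Y_d$ counts the number of distinct neighbours, among the remaining $d$ forward ones, that get picked.

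The key combinatorial step is to compute $\mathbb{P}(Y_d = y\mid N = n)$. First choose the $y$ forward neighbours that are hit, in $\binom{d}{y}$ ways; then count the maps from the $n$ survivors onto this set of $y$ neighbours together with possibly the lethal one, splitting according to whether the lethal neighbour is chosen by at least one survivor: this gives $T(n,y+1)$ maps in the first case and $T(n,y)$ in the second, hence
\[ \mathbb{P}(Y_d = y \mid N = n) = \binom{d}{y}\,\frac{T(n,y) + T(n,y+1)}{(d+1)^n}, \qquad y = 0,1,\dots,d. \]
Summing over $n$ and weighting by $s^y$ yields the stated expression for $\mathbb{E}(s^{Y_d})$, and since $\lowP$ is a branching process with this offspring law, $\rho$ is by definition the smallest non-negative root of $\mathbb{E}(s^{Y_d}) = s$.

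Next I would turn to $\low$ itself. Let $Y_{d,R}$ denote the number of colonies created at the $d+1$ neighbours of the origin at the first collapse; since at the origin there is no neighbour towards the origin, every chosen neighbour is colonized, so exactly as in Proposition~\ref{P: CCSRSR1V} one gets $\mathbb{P}(Y_{d,R} = r\mid N = n) = \binom{d+1}{r}T(n,r)/(d+1)^n$, and in particular $\mathbb{P}(Y_{d,R} = 0) = \mathbb{P}(N=0)$. Conditionally on $Y_{d,R} = r$, the process continues as $r$ independent copies of $\lowP$ rooted at the $r$ occupied neighbours, so $\mathbb{P}(V_d^{c}\mid Y_{d,R} = r) = \rho^{r}$; unconditioning over $Y_{d,R}$ gives the stated formula for $\mathbb{P}(V_d)$ (the $r=0$ term vanishes because of the factor $1-\rho^0$).

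Finally, for $\mathbb{E}(I_d)$ in the subcritical regime $\mathbb{E}[(d/(d+1))^N] > (d-1)/d$, equation~(\ref{E: MediaYL}) gives $\mathbb{E}(Y_d) = d\,[1 - \mathbb{E}((d/(d+1))^N)] < 1$, so each $\lowP$ subtree has finite expected total progeny $\theta_l = \{1 - d[1 - \mathbb{E}((d/(d+1))^N)]\}^{-1} = (1-\mathbb{E}(Y_d))^{-1}$; counting the origin's own colony together with the $r$ subtrees, the total-progeny identity (e.g.\ Stirzaker~\cite[Exercise 2b, page 280]{Stirzaker}) gives $\mathbb{E}(I_d\mid Y_{d,R} = r) = 1 + r\theta_l$, and unconditioning over $Y_{d,R}$, with the $r=0$ term contributing $\mathbb{P}(N=0)$, yields the claimed expression. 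I expect the main obstacle to be the combinatorial identity for $\mathbb{P}(Y_d = y \mid N = n)$ — in particular getting the case split on the lethal neighbour right — together with verifying that $\lowP$ (on $\bbT^d_+$, whose origin has degree $d$) is genuinely a homogeneous branching process, which is precisely why that auxiliary graph is defined with a degree-$d$ root rather than a degree-$(d+1)$ one.
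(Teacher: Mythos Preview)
Your proposal is correct and follows essentially the same approach as the paper: condition on the number $Y_{d,R}$ of first-generation colonies at the origin, identify the $r$ resulting subprocesses with independent copies of $\lowP$, read off $\rho$ as the smallest fixed point of the offspring generating function, and use the total-progeny identity for $\mathbb{E}(I_d)$. Your case-split derivation of $\mathbb{P}(Y_d=y\mid N=n)=\binom{d}{y}[T(n,y)+T(n,y+1)]/(d+1)^n$ is in fact more detailed than the paper's proof, which simply asserts this formula.
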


\begin{proof}[Proof of Proposition~\ref{P: CCSRCR1V}]
Let $Y_{d,R}$ be the number of colonies created at the neighbour vertices of the origin from
its colony at the collapse time. Then
\begin{displaymath}
\mathbb{P}(V_d)  = \sum_{r=0}^{d+1} \mathbb{P}(V_d| Y_{d,R} = r) \mathbb{P}
(Y_{d,R} = r)
\end{displaymath}
where
\begin{displaymath}
\mathbb{P}(Y_{d,R}=r) =\sum_{n=r}^{\infty} \left[ \mathbb{P}(N=n) \frac
{\dbinom{d+1}{r}T(n,r)}{(d+1)^n} \right] \textrm { for } r =0,1,2,
\cdots, d+1.
\end{displaymath}
because
\begin{displaymath}
\mathbb{P}(Y_{d,R}=r | N = n) = \frac
{\dbinom{d+1}{r}T(n,r)}{(d+1)^n}.
\end{displaymath}

Given that $Y_{d,R} = r$ one have $r$ independent $\lowP$ processes living on $r$ independent
rooted trees. Every vertex $x$ which is colonized, on some of these trees, right after the collapse will
have $N$ survival individuals. These individuals will produce
$Y_d$ new colonies (whose distribution depends only on $N$) on the $d$ neighbour vertices which are
located further from the origin than $x$ is. So we have that

\begin{displaymath}
\mathbb{P}(Y_d=y | N = n) = \frac{\dbinom{d}{y}[T(n,y)+ T(n,y+1)]}{(d
+1)^n}
\end{displaymath}
From this,
\begin{displaymath}
\mathbb{P}(Y_d=y) =\sum_{n=y}^{\infty} \left[ \mathbb{P}(N=n) \frac
{\dbinom{d}{y}[T(n,y)+ T(n,y+1)]}{(d+1)^n} \right] \textrm { for } y
=0,1,2, \cdots, d.
\end{displaymath}
and
\begin{displaymath}
\mathbb{E}(s^{Y_d})  = \sum_{y=0}^{d}s^y \sum_{n=y}^{\infty} \left[
\mathbb{P}(N=n) \frac{\dbinom{d}{y}[T(n,y)+ T(n,y+1)]}{(d+1)^n}
\right].
\end{displaymath}
Then $\mathbb{P}({V_d}^C | Y_{d,R} = r) = \rho^r,\ r =0,1,2,
\cdots, d+1$ and
\begin{displaymath}
\mathbb{P}(V_d) = \sum_{r=1}^{d+1}\left [(1 - \rho^r)\binom{d+1}
{r}\sum_{n=r}^{\infty}\frac{T(n,r)}{(d+1)^n}\mathbb{P}(N=n) \right]
\end{displaymath}
As for the second part of the proposition
\begin{displaymath}
\mathbb{E}(I_d)  = \sum_{r=0}^{d+1} \mathbb{E}(I_d| Y_{d,R} = r) \mathbb{P}
(Y_{d,R} = r).
\end{displaymath}
Besides that,  $\mathbb{E}(I_d| Y_{d,R} = r) = r\theta_l + 1$
(see Stirzaker~\cite[Exercise 2b, page 280]{Stirzaker}).
\end{proof}

\begin{prop} 
\label{P: RCR1VL2}
Consider the process $\low$. Then, 
\begin{equation}
\label{limPVD2}
\lim_{d \to \infty} \mathbb{P}(V_d) = 1 - \nu
\end{equation}
where  $\nu$ is the smallest non-negative solution of $ \mathbb{E}(s^N) = s$. Besides that, if 
$\mathbb{E}(N) < 1$ (the subcritical case) then
\begin{equation}
\label{limEID2}
\lim_{d \to \infty} \mathbb{E}(I_d) = \frac{1}{1 - \mathbb{E}(N)}. 
\end{equation}
\end{prop}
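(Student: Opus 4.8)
The plan is to follow, almost verbatim, the argument used for the self-avoiding model in Proposition~\ref{P: CCSRSR1VL}, only replacing the offspring laws of $\upP$ by those of $\lowP$ computed in Proposition~\ref{P: CCSRCR1V}. Recall from there that, conditionally on $\{Y_{d,R}=r\}$, the process $\low$ splits into $r$ independent copies of the homogeneous branching process $\lowP$, whose offspring $Y_d$ has generating function $\sum_{y=0}^{d}s^{y}\binom{d}{y}\sum_{n\ge y}\frac{T(n,y)+T(n,y+1)}{(d+1)^{n}}\mathbb{P}(N=n)$, and that $\mathbb{P}(V_d^{c}\mid Y_{d,R}=r)=\rho_d^{r}$, where $\rho_d$ is the extinction probability of $\lowP$; equivalently $\mathbb{P}(V_d)=1-\mathbb{E}(\rho_d^{Y_{d,R}})$. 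Hence everything reduces to identifying the limits, as $d\to\infty$, of $\rho_d$ and of the laws of $Y_d$ and $Y_{d,R}$.

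First I would show $Y_d\overset{D}{\to}N$ and $Y_{d,R}\overset{D}{\to}N$. Conditioning on $\{N=n\}$, $Y_d$ is the number of distinct forward neighbours hit when $n$ survivors each pick one of the $d+1$ neighbours uniformly; since a surjection from $n$ balls onto $n+1$ boxes is impossible one has $\mathbb{P}(Y_d=n\mid N=n)=\binom{d}{n}\frac{n!}{(d+1)^{n}}\to 1$, while $\mathbb{P}(Y_d=r\mid N=n)=O(d^{r-n})\to 0$ for $r<n$; thus $\mathbb{E}(s^{Y_d}\mid N=n)\to s^{n}$ for every $s\in[0,1]$, and bounded convergence over $n$ gives $\mathbb{E}(s^{Y_d})\to\mathbb{E}(s^{N})$, i.e.\ $Y_d\overset{D}{\to}N$; the same computation with $d+1$ forward neighbours gives $Y_{d,R}\overset{D}{\to}N$. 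I would also check that $Y_d$ is stochastically non-decreasing in $d$ (hypothesis (ii) of Proposition~\ref{P: ConvBranching}): conditionally on $N=n$, a sequential coupling of the $n$ uniform choices in dimensions $d$ and $d+1$ keeps the running count of distinct forward neighbours in dimension $d$ below that in dimension $d+1$, the only point to verify being that when the two counts agree a ``new'' neighbour in dimension $d$ can be matched by a ``new'' neighbour in dimension $d+1$, which holds because $\frac{d-a}{d+1}\le\frac{d+1-a}{d+2}$.

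Granting these, Proposition~\ref{P: ConvBranching} applied to the branching process $\lowP$ gives $\rho_d\to\nu$, the smallest non-negative root of $\mathbb{E}(s^{N})=s$. Since $\rho_d\to\nu$ deterministically and $Y_{d,R}\overset{D}{\to}N$, continuous mapping together with bounded convergence yields $\mathbb{E}(\rho_d^{Y_{d,R}})\to\mathbb{E}(\nu^{N})=\nu$, the last equality because $\nu$ is a fixed point of $\mathbb{E}(s^{N})=s$; this is (\ref{limPVD2}). For (\ref{limEID2}) assume $\mathbb{E}(N)<1$. By Proposition~\ref{P: CCSRCR1V}, $\mathbb{E}(I_d\mid Y_{d,R}=r)=r\theta_l+1$ with $\theta_l=\{1-d[1-\mathbb{E}((\frac{d}{d+1})^{N})]\}^{-1}$, and since $Y_{d,R}$ is bounded, $\mathbb{E}(I_d)=1+\theta_l\,\mathbb{E}(Y_{d,R})$. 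From $1-(1-x)^{n}\le nx$ and the pointwise limit $d[1-(1-\frac{1}{d+1})^{n}]\to n$, dominated convergence (with dominating random variable $N$, integrable as $\mathbb{E}(N)<1$) gives $d[1-\mathbb{E}((\frac{d}{d+1})^{N})]\to\mathbb{E}(N)$, hence $\theta_l\to(1-\mathbb{E}(N))^{-1}$; the same estimate, applied to $\mathbb{E}(Y_{d,R})=(d+1)[1-\mathbb{E}((\frac{d}{d+1})^{N})]$ (by the conditioning leading to (\ref{E: MediaYL})), gives $\mathbb{E}(Y_{d,R})\to\mathbb{E}(N)$. Therefore $\lim_{d\to\infty}\mathbb{E}(I_d)=\frac{\mathbb{E}(N)}{1-\mathbb{E}(N)}+1=\frac{1}{1-\mathbb{E}(N)}$.

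The hard part is really the first step: pinning down the limiting laws of $Y_d$ and $Y_{d,R}$ and, in particular, the stochastic monotonicity in $d$ that licenses the use of Proposition~\ref{P: ConvBranching}. Once that is in place the argument is the same bookkeeping — now carried out with the generating functions of $\lowP$ from Proposition~\ref{P: CCSRCR1V} instead of those of $\upP$ — as in the self-avoiding case of Proposition~\ref{P: CCSRSR1VL}, so Proposition~\ref{P: RCR1VL2} could in fact be stated and proved jointly with Proposition~\ref{P: CCSRSR1VL}.
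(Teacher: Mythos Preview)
Your proposal is correct and follows essentially the same route as the paper: apply Proposition~\ref{P: ConvBranching} (using $Y_d\overset{D}{\to}N$, $Y_{d,R}\overset{D}{\to}N$, and stochastic monotonicity of $Y_d$ in $d$) to obtain~(\ref{limPVD2}), and use the decomposition $\mathbb{E}(I_d\mid Y_{d,R}=r)=r\theta_l+1$ together with $\theta_l\to(1-\mathbb{E}(N))^{-1}$ and dominated convergence to obtain~(\ref{limEID2}). The paper's proof is a two-line sketch that simply invokes Proposition~\ref{P: ConvBranching} and the Dominated Convergence Theorem; you have supplied the verifications (the explicit limit of the conditional laws, the occupancy coupling for monotonicity, and the computation $d[1-\mathbb{E}((\tfrac{d}{d+1})^{N})]\to\mathbb{E}(N)$) that the paper leaves implicit.
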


\begin{proof}[Proof of Proposition~\ref{P: RCR1VL2}]
In order to prove~(\ref{limPVD2}) one has to aply Proposition~\ref{P: ConvBranching}, observing that $Y_d \overset{D}{\to} N$ and $Y_{d,R} \overset{D}{\to} N.$
For the proof of~(\ref{limEID2}) observe that
\[
\lim_{d \to \infty} \mathbb{E}(I_d) = \lim_{d \to \infty} \sum_{r=0}^{d+1} \mathbb{E}(I_d|Y_{d,R}=r)\bbP(Y_{d,R}=r).\]
As  $Y_d \overset{D}{\to} N$ and $Y_{d,R} \overset{D}{\to} N $ then
\[ \lim_{d \to \infty}  \mathbb{E}(I_d|Y_{d,R}=r) = \lim_{d \to \infty} r \theta_l +1 = \frac{r}{1-\mathbb{E}(N)}+1. \]
The result follows from the Dominated Convergence Theorem~\cite[Theorem 9.1 page 26]{Thorisson}.
\end{proof}

\begin{prop} \label{P: LIMTSRCR1}
Consider the process $\lowP$.
Assuming
\begin{displaymath}
\mathbb{E} \left [ \left(\frac{d}{d+1} \right)^N \right]  > \frac{d-
1}{d}
\end{displaymath}
We have that
\begin{displaymath}
\frac{[1+D(1-\mu)][1-\mu^{m+1}]}{1+ D(1-\mu)-\mu^{m+1}} \leq \mathbb{P}(M_d \leq m) \leq \frac{[1 + \frac{\mu(1-\mu)}{B}](1-\mu^{m+1})}{ 1 + \frac{\mu(1-\mu)}{B} - \mu^{m+1}}
\end{displaymath}
and
\begin{displaymath}
 \frac{\mu^2}{2(B+ \mu)} + \mu(1-\mu)\frac{\ln \left[1 - \frac{\mu B}{B + \mu(1-\mu)}\right]}{B \ln \mu} \leq \mathbb{E}(M_d) \leq \frac{D \mu}{D+1} + D(1-\mu)\frac{\ln \left[1 - \frac{\mu}{1 + D(1-\mu)}\right]}{\ln \mu}
\end{displaymath}
where
\begin{align*}
\mu &= d \left [ 1 -  \mathbb{E} \left [ \left(\frac{d}{d+1} \right)^N \right]  \right ] \\
D &= \max \left \{2; \frac{\mu}{\mu - \mathbb{P}(N \neq 0)} \right\} \\
B& = d(d-1)\left[ 1 - 2\mathbb{E} \left ( \left(\frac{d }{d+1} \right)^N \right) + \mathbb{E} \left ( \left(\frac{d-1 }{d+1} \right)^N \right) \right].
\end{align*}
Besides that,
\begin{displaymath}
M_d \overset{D}{\to} M,
\end{displaymath}
where $\mathbb{P}( M \leq m) = g_{m+1}(0)$, being $g(s) = \mathbb{E}(s^N) $ and $g_{m+1}(s) = \overset{ m+1 \textrm { times }} {g(g(\cdots g(s)) \cdots )}$.
\end{prop}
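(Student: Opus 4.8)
We sketch how Proposition~\ref{P: LIMTSRCR1} can be proved; the argument follows the same route already used for the Self Avoiding model in Proposition~\ref{P: LIMTSRSR1}, only with the combinatorics of the Move Forward or Die model in place of those of the Self Avoiding one. By Proposition~\ref{P: CCSRCR3} the process $\lowP$ evolves as a homogeneous branching process in which every colonized vertex $x$ produces $Y_d$ colonies on the $d$ neighbours of $x$ that lie further from the origin, and $M_d$ is exactly the last generation in which this branching process is alive. So the first step is to compute the first two factorial moments of the offspring law $Y_d$. The plan is to write $Y_d = \sum_{i=1}^{d} I_i$, with $I_i$ the indicator that the $i$-th forward neighbour of $x$ is colonized, condition on $N=n$, and use that each of the $n$ survivors picks, independently and uniformly, one of the $d+1$ neighbours of $x$; this gives $\mathbb{P}(I_i = 0 \mid N=n) = (d/(d+1))^n$ and $\mathbb{P}(I_i = 0, I_j = 0 \mid N=n) = ((d-1)/(d+1))^n$ for $i\ne j$. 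Averaging over $N$ and using $I_i^2 = I_i$ in $\mathbb{E}(Y_d^2)= d\,\mathbb{E}(I_1) + d(d-1)\,\mathbb{E}(I_1 I_2)$ then yields $g_{Y_d}'(1) = \mathbb{E}(Y_d) = \mu$ (this is precisely equation~(\ref{E: MediaYL})) and
\[
g_{Y_d}''(1) = \mathbb{E}\left(Y_d(Y_d-1)\right) = d(d-1)\,\mathbb{E}(I_1 I_2) = d(d-1)\left[1 - 2\,\mathbb{E}\left(\left(\frac{d}{d+1}\right)^N\right) + \mathbb{E}\left(\left(\frac{d-1}{d+1}\right)^N\right)\right] = B .
\]
I would also note that the standing hypothesis $\mathbb{E}[(d/(d+1))^N] > (d-1)/d$ is equivalent to $\mu<1$, i.e.\ the branching process is subcritical, so $M_d$ is almost surely finite with finite mean.

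The second step is to feed the two moments $g_{Y_d}'(1) = \mu$, $g_{Y_d}''(1) = B$ — together with the bound $\mathbb{P}(Y_d\ne 0)\le \mathbb{P}(N\ne 0)$, which controls the auxiliary constant $D$ — into the two-sided estimates for the extinction-time distribution of a subcritical branching process, that is, Theorem~1, p.~331 of~\cite{AA}, applied with offspring mean $m=\mu$. This should produce verbatim the four displayed inequalities for $\mathbb{P}(M_d\le m)$ and $\mathbb{E}(M_d)$, the constant $D = \max\{2;\,\mu/(\mu-\mathbb{P}(N\ne 0))\}$ being simply the form in which that theorem repackages $g_{Y_d}'(1)$ and $\mathbb{P}(Y_d\ne 0)$. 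For the convergence statement I would use the classical identity $\{M_d\le m\} = \{Z_{m+1}=0\}$ for the associated branching process $\{Z_n\}$, so that $\mathbb{P}(M_d\le m) = g_{Y_d,\,m+1}(0)$, the $(m+1)$-fold composition of $g_{Y_d}$ evaluated at $0$; since $Y_d\overset{D}{\to}N$ as $d\to\infty$ (with $n$ fixed, the $n$ survivors land on $n$ distinct forward neighbours with probability tending to $1$, exactly as invoked in Proposition~\ref{P: RCR1VL2}), Proposition~\ref{P: CFGP} gives $g_{Y_d,\,m+1}(0)\to g_{m+1}(0)$ with $g(s)=\mathbb{E}(s^N)$, which is the claimed limiting law $\mathbb{P}(M\le m)=g_{m+1}(0)$, i.e.\ $M_d\overset{D}{\to}M$.

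The main obstacle I anticipate is the second step: one has to match the precise output of~\cite[Theorem~1, p.~331]{AA} — stated in terms of the mean and the second factorial moment of the offspring law — to the particular constants $\mu$, $B$ and $D$ appearing in the statement, and verify that the inequalities there specialize exactly to the ones displayed. Everything else — the two moment computations and the limit via Proposition~\ref{P: CFGP} — is routine and runs word for word as in the Self Avoiding case already treated in Proposition~\ref{P: LIMTSRSR1}.
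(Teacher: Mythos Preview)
Your proposal is correct and follows essentially the same approach as the paper: write $Y_d=\sum_{i=1}^d I_i$, compute $g_{Y_d}'(1)=\mu$ and $g_{Y_d}''(1)=B$ via the conditional probabilities $\mathbb{P}(I_i=0\mid N=n)=(d/(d+1))^n$ and $\mathbb{P}(I_i=I_j=0\mid N=n)=((d-1)/(d+1))^n$, invoke Agresti's Theorem~1 (p.~331 of~\cite{AA}) with $m=g_{Y_d}'(1)$, and obtain the limit from $Y_d\overset{D}{\to}N$ together with Proposition~\ref{P: CFGP}. If anything, you supply more detail than the paper does (the explicit link between $\mathbb{P}(Y_d\ne 0)$ and $\mathbb{P}(N\ne 0)$ and the identification $\mathbb{P}(M_d\le m)=g_{Y_d,m+1}(0)$), and you are right to flag the bookkeeping needed to match Agresti's constants to the displayed $D$ as the only nonmechanical point.
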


\begin{proof}[Proof of Proposition~\ref{P: LIMTSRCR1}]
Every vertex $x$ which is colonized produces
$Y_d$ new colonies (whose distribution depends only on $N$) on the $d$ neighbour vertices which
are located further from the origin than $x$ is. The random variable $Y_d$ can be seen as $Y_d = \sum_{i=1}^{d}I_i$
where for $i=1, \dots, d$
\begin{displaymath}
I_i = \left\{%
\begin{array}{ll}
1, & \hbox{the $i-th$ neighbour of $x$ is colonized} \\
0, & \hbox{else.} \\
\end{array}%
\right.
\end{displaymath}
Defining $g_{Y_d}(s)$ as the generating function of $Y_d$ observe that equation~(\ref{E: MediaYL}) gives $g_{Y_d}^{\prime}(1)$. Moreover
\begin{displaymath}
 {Y_d}^2 = \left (\sum_{i=1}^{d}I_i \right )^2 = \sum_{i=1}^{d}I_i^2 + 2\sum_{1 \leq i < j \leq d}I_iI_j
\end{displaymath}
and
\begin{displaymath}
 \mathbb{E} \left({Y_d}^2 \right ) =  d \mathbb{E} \left (I_1^2 \right ) + d(d-1) \mathbb{E}(I_1I_2)
\end{displaymath}
and finally
\begin{displaymath}
 \mathbb{E} \left({Y_d}^2 \right ) =  d \left[ 1 - \mathbb{E} \left [\left( \frac{d}{d+1} \right)^N \right] \right]   + d(d-1) \left [  1 -2  \mathbb{E} \left [\left ( \frac{d}{d+1} \right )^N \right] + \mathbb{E} \left [\left ( \frac{d-1}{d+1} \right )^N \right] \right ]
\end{displaymath}
Then
\begin{align*}
g_{Y_d}^{\prime \prime}(1) = \mathbb{E} \left(Y_d(Y_d-1) \right ) =  d(d-1) \left [  1 -2  \mathbb{E} \left [\left ( \frac{d}{d+1} \right )^N \right] + \mathbb{E} \left [\left ( \frac{d-1}{d+1} \right )^N \right] \right ]
\end{align*}
Then the result follows from Theorem 1 page 331 in~\cite{AA}, where $ m = g_{Y_d}^{\prime}(1)$.

The convergence $M_d \overset{D}{\to} M$ follows from the fact that $Y_d \overset{D}{\to} N$ when $ d \to \infty$ and from Proposition \ref{P: CFGP}.
\end{proof}

\subsection{Proofs of the main results}
\hfill

First we define a coupling between the processes $\procT$ and $\lowP$ in such a way that
the former is dominated by the earlier. Every colony in $\lowP$ is associated to a colony
in $\procT$. As a consequence, if the process $\procT$ dies out, the same happens to
$\lowP$.

At every collapse time at a vertex $x$ in the original model, a non-empty group of individuals
that tries to colonize the neighbour vertex to $x$ which is closer to the origin than $x$ will create there a new colony provided that that vertex is empty. In the  model $\lowP$ the same non-empty group of individuals that tries to colonize the same vertex, imediately dies.

Next we define a coupling between the processes $\procT$ and $\upPd$ in such a way that
the former dominates the earlier. Every colony in $\procT$ can be associated to a colony
in $\upPd$. As a consequence if the process $\upPd$ dies out, the same happens to
$\procT$.

At every collapse time at a vertex $x$ we associate the neighbour
vertex to $x$ which is closer to the origin than $x$ to the extra vertex
on the model $\upPd$. In the original model, a non-empty group of individuals
that tries to colonize the neighbour vertex to $x$ which is closer to the origin than $x$ will create there a new colony provided that that vertex is empty. In the
model $\upPd$ the same non-empty group of individuals that tries to colonize the
extra vertex, founds a new colonony there.

\begin{proof}[Proof of Theorem~\ref{T: MCC1H}]

The result follows from the fact that the process $\procT$ dominates the process
$\lowP$ and by its turn, is dominated by the process $\upPd$, together with
Propositions~\ref{P: CCSRSR3} and~\ref{P: CCSRCR3}.

\end{proof}

\begin{proof}[Proof of Corollary~\ref{C: MCC1HP}]
Assuming $s=\frac{d}{d+1}$ in (\ref{eq: fgpP}) and applying Theorem~\ref{T: MCC1H} the
result follows.
\end{proof}

\begin{proof}[Proof of Corollary~\ref{C: MCC1HPY}]
Assuming $s=\frac{d}{d+1}$ in (\ref{eq: fgpY}) and applying Theorem~\ref{T: MCC1H} the
result follows.
\end{proof}

\begin{proof}[Proof of Theorem~\ref{T: MCCHPE1}]
The result follows from the fact that the process $\procT$ dominates the process
$\lowP$ and by its turn, is dominated by the process $\upPd$, together with
Propositions~\ref{P: CCSRSR1V} and~\ref{P: CCSRCR1V}.
\end{proof}

\begin{proof}[Proof of Theorem~\ref{T: MCCHPE1L}]
The result follows from the fact that the process $\procT$ dominates the process
$\lowP$ and by its turn, is dominated by the process $\upPd$, together with
Propositions~\ref{P: CCSRSR1VL} and~\ref{P: RCR1VL2}.
\end{proof}

\begin{proof}[Proof of Corollary~\ref{C: MCC1HPS}]
The proof is just a matter of computing the smallest positive fixed point for the generating function of
$N$ (the smallest positive $s$ such that $\mathbb{E}(s^N) = s$) for $\mathbb{E}(s^N)$ given in~(\ref{eq: fgpP}).
\end{proof}

\begin{proof}[Proof of Corollary~\ref{C: MCC1HPS2}]
The proof is just a matter of computing the smallest positive fixed point for the generating function of
$N$ (the smallest positive $s$ such that $\mathbb{E}(s^N) = s$) for $\mathbb{E}(s^N)$ given in~(\ref{eq: fgpY}).
\end{proof}

\begin{proof}[Proof of Theorem~\ref{T: LEIT}]
The result follows from the fact that the process $\procT$ dominates the process
$\lowP$ and by its turn, is dominated by the process $\upPd$, together with
Propositions~\ref{P: LIMTSRSR1} and~\ref{P: LIMTSRCR1}.
\end{proof}

\begin{proof}[Proof of Corollary~\ref{C: CTM1}]
The proof is just a matter of computing the generating function of $N$ (see Equation~(\ref{eq: fgpP})) on
both values $s = \frac{d}{d+1}$ and $s = \frac{d-1}{d+1}$.
\end{proof}

\begin{proof}[Proof of Theorem~\ref{T: LEITL}]
The result follows from the fact that the process $\procT$ dominates the process
$\lowP$ and by its turn, is dominated by the process $\upPd$, together with
Propositions~\ref{P: LIMTSRSR1} and~\ref{P: LIMTSRCR1}.
\end{proof}

\begin{defn} \label{fgpfl}
A fractional linear generating function is a probability generating function of the form
\begin{equation*}
f(b,c;s) = 1 - \frac{b}{1-c} + \frac{bs}{1-cs}, \ 0 \leq s \leq 1.
\end{equation*}
where $0 \leq b \leq 1$, $0 \leq c \leq 1$, and $b+c \leq 1$.
\end{defn}

\begin{proof}[Proof of Corollary~\ref{C: LEITL}]
Observe that the generating function of $N$ given in (\ref{eq: fgpP}) is a fractional linear generating function. The results
follow from equations (3.1) and (3.2) in \cite{AA} page 330 and from Theorem \ref{T: LEITL}.
\end{proof}

\begin{proof}[Proof of Theorem~\ref{T: NC}]
The result follows from the fact that the process $\procT$ dominates the process
$\lowP$ and by its turn, is dominated by the process $\upPd$, together with
Propositions~\ref{P: CCSRSR1V} and~\ref{P: CCSRCR1V}.
\end{proof}

\end{document}